\newtheorem{thm}[subsection]{Theorem}
\newtheorem{lemma}[subsection]{Lemma}
\newtheorem{pro}[subsection]{Proposition}
\newtheorem{cor}[subsection]{Corollary}
\numberwithin{equation}{section} \setcounter{tocdepth}{1}
\newcommand{\G}{{\mathcal G}}
\def\cb{{\mathcal B}}
\newcommand{\w}{{\bf w}}
\newcommand{\s}{{\sigma}}
\newcommand{\bea}{\begin{eqnarray}}
\newcommand{\eea}{\end{eqnarray}}
\newcommand{\Z}{\mathbb{Z}}
\newcommand{\Q}{\mathbb{Q}}
\def\ce{{\mathcal E}}
\def\cg{{\mathcal G}}
\def\bn{{\mathbb N}}
\def\bq{{\mathbb Q}}
  \def\G{\Gamma}
\def\k{\kappa}
\def\m{\mu}
\def\s{\sigma} 
\def\t{\theta}
\def\w{\omega} \def\Om{\Omega}
\def\h{{\mathbf{h}}}
\def\xb{{\mathbf{x}}}
\begin{document}

\title[$p$-adic Potts-Bethe mapping]
{Chaotic behavior of the $P$-adic Potts-Bethe mapping}

\author{Farrukh Mukhamedov}
\address{Farrukh Mukhamedov\\
 Department of Mathematical Sciences\\
College of Science, The United Arab Emirates University\\
P.O. Box, 15551, Al Ain\\
Abu Dhabi, UAE} \email{{\tt far75m@gmail.com} {\tt
farrukh.m@uaeu.ac.ae}}

\author{ Otabek Khakimov}
\address{ Otabek Khakimov\\
Institute of Mathematics, National University of Uzbekistan, 29,
Do'rmon Yo'li str., 100125, Tashkent, Uzbekistan.} \email {{\tt
hakimovo@mail.ru}}

\begin{abstract}
In our previous investigations, we have developed the
renormalization group method to $p$-adic models on Cayley trees,
this method is closely related to the investigation of $p$-adic
dynamical systems associated with a given model. In this paper, we
study chaotic behavior of the Potts-Bethe mapping. We point out that
a similar kind of result is not known in the case of real numbers
(with rigorous proofs).
 \vskip 0.3cm \noindent {\it
Mathematics Subject Classification}: 37B05, 37B10,12J12, 39A70\\
{\it Key words}: $p$-adic numbers, Ising-Potts function; chaos;
shift.
\end{abstract}

\maketitle

\section{introduction}

In \cite{Egg} it was studied the thermodynamic behavior of the
central site of an Ising spin system with ferromagnetic
nearest-neighbor interactions on a Cayley tree by recursive methods.
This method opened new perspectives between the recursion approach
and the theory of dynamical systems. The existence of the phase
transition is closely connected to the existence of chaotic behavior
of the associated dynamical system which is governed by the
Potts-Bethe function.  Investigation the dynamics of this function
has been the object of no small amount of study in the real and
complex settings. This deceptively simple family of rational
functions has given rise to a surprising number of interesting
dynamical features (see for example \cite{ADH,BG, FTC,Kap,Monr}).

On the  other hand, recently, polynomials and rational maps of
$p$-adic numbers $\bq_p$ have been studied as dynamical systems over
this field \cite{B0,B1}. It turns out that these $p$-adic dynamical
systems are quite different to the dynamical systems in Euclidean
spaces (see for example, \cite{AKh,FL2,HY,KhN,L,Sil1} and their
bibliographies therein). In theoretical physics, the interest in
$p$-adic dynamical systems was started with the development of
$p$-adic models \cite{M13,MK16,KhN}. In these investigations, it was
stresses the importance of the detecting chaos in the $p$-adic
setting \cite{Kh22,TVW,Wo}. In \cite{M15} it has been developed the
renormalization group method to study phase transitions for several
$p$-adic models on Cayley trees. In \cite{MR1,M12,M13,MA131} we
studied $q$-state $p$-adic Potts model on Cayley tree of order $k$
(or Bethe lattice). To investigate the phase transition for this
model, we need to investigated dynamical behavior of the
\textit{Potts-Bethe mapping} (see Appendix)
\begin{equation}\label{11}
f_\theta(x)=\left(\frac{\theta x+q-1}{x+\theta+q-2}\right)^k.
\end{equation}
Most of the results have been obtained in case $k=2$ and $q$ is
devisable by $p$, i.e. $|q|_p<1$ (see \cite{MR1}). Namely, in
\cite{MK16} we have prove that the existence of the phase transition
for the model implies the chaoticity of the Potts-Bethe mapping (in
the mentioned regime). For the case $k=3$ and $p\equiv 2 (mod 3)$,
the stability of the mapping \eqref{11} have been studied in
\cite{SA16}. Note that case $k=1$ it has been investigated in
\cite{FL3,MR0}.

In the present paper, we study dynamics of the $p$-adic Potts-Bethe
mapping in a general setting. Namely, we consider $k$ is arbitrary
and $q$ is not divisible $p$ (The case when $|q|_p<1$ will be
considered elsewhere). The main result of the present paper is the
following one.

\begin{thm}\label{mainres1} Let $p\geq3$, $|q|_p=1$ and $\sqrt[k]{1-q}\in\mathbb Q_p$. Assume that $X$ be a set defined as \eqref{X}.
If $|k|_p>p^{\frac{s(k-1)}{k}}|\theta-1|_p$ then $f_\theta$ over the
Julia set is conjugate to the full shift $(\Sigma,\sigma)$.
\end{thm}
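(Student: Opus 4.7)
My plan is to realize the claimed conjugacy as a standard itinerary coding for a full-branch Markov partition on $X$. Using the factorization $f_\theta = h_k \circ g_\theta$, where $g_\theta(x)=(\theta x+q-1)/(x+\theta+q-2)$ is a M\"obius transformation and $h_k(y)=y^k$ is the $k$-th power map, the hypothesis $\sqrt[k]{1-q}\in\mathbb{Q}_p$ together with $p\geq 3$ supplies enough $k$-th roots in $\mathbb{Q}_p$ for $f_\theta$ to admit several single-valued inverse branches on the relevant domain. The two obvious fixed points, namely $x=1$ and the one anchored by $\sqrt[k]{1-q}$, will provide the local models around which the analysis is carried out.

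The first technical step is a uniform $p$-adic expansion estimate of the form $|f_\theta(x)-f_\theta(y)|_p \geq C|x-y|_p$ with $C>1$ on $X$. Differentiating $f_\theta$, the derivative picks up a factor of $k$ from the outer $k$-th power together with a rational expression in $\theta, q, x$ whose $p$-adic size is controlled by $|\theta-1|_p$ (using $|q|_p=1$). The hypothesis $|k|_p>p^{s(k-1)/k}|\theta-1|_p$ is designed precisely so that, after combining with the size of elements in $X$ given by \eqref{X}, the resulting valuation of $f_\theta'$ forces strict $p$-adic expansion.

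Next, I would decompose $X$ as a disjoint union of finitely many $p$-adic balls $X_0,\ldots,X_{N-1}$ such that each $X_i$ is mapped bijectively onto $X$ by $f_\theta$. The number $N$ is obtained by counting $\mathbb{Q}_p$-solutions of $f_\theta(x)=y$ inside $X$ for a fixed $y\in X$; having the $k$-th roots available and applying a Hensel-type argument to the M\"obius inverse of $g_\theta$ reduces this to a clean count dictated by the arithmetic of $k$ modulo $p$ and by the parameter $s$. With this Markov structure in place, the Julia set
$$J=\bigcap_{n\geq 0} f_\theta^{-n}(X)$$
is totally disconnected and perfect, hence a Cantor set, and the itinerary map $\pi : J \to \Sigma$ sending $x$ to the sequence $(i_n)$ defined by $f_\theta^n(x)\in X_{i_n}$ is a homeomorphism by the standard ultrametric nested-balls argument, conjugating $f_\theta|_J$ with the one-sided full shift $\sigma$.

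The main obstacle will be the simultaneous verification that the $N$ candidate balls $X_i$ are pairwise disjoint \emph{and} that each surjects onto all of $X$. This is exactly where the threshold $|k|_p>p^{s(k-1)/k}|\theta-1|_p$ becomes decisive: a weaker bound would leave inverse branches whose images fail to cover $X$, breaking the full-shift property, while the stated bound furnishes both the expansion and the range condition at once. Once the Markov partition and the expansion estimate are established, the remainder of the argument is purely symbolic and follows the standard template for $p$-adic hyperbolic dynamics.
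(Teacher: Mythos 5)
Your plan follows essentially the same route as the paper: the authors establish the expansion estimate $|f_\theta(x)-f_\theta(y)|_p=\frac{|k|_p}{p^{s(k-1)/k}|\theta-1|_p}\,|x-y|_p$ on each of the $\kappa_p$ balls making up $X$ (Proposition \ref{prop12}), prove $f_\theta^{-1}(X)\subset X$ via explicit inverse branches built from $\sqrt[k]{1-q}$ (Proposition \ref{prop13}), show that the image of every ball covers all of $X$ so the incidence matrix is the all-ones matrix (Corollary \ref{corol2}), and then invoke the Fan--Liao--Wang--Zhou repeller theorem, which is precisely your itinerary coding. The only cosmetic difference is that the Markov balls are centered at the $\kappa_p$ points $x_i$ near $1-q$ determined by $\mbox{Sol}_p(x^k+q-1)$, not at the fixed point $x=1$, which is attracting and lies outside $X$.
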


We point out that the necessary and sufficient conditions for the
existence of $\sqrt[k]{1-q}$ in $\bq_p$ are given in \cite{MS13}. If
we take $q=2$ in \eqref{11} then the mapping is reduced to the Ising
mapping, so from Theorem \ref{mainres1} and \cite[Theorem 6.1]{MS13}
we immediately get the following corollary for the Ising mapping
which covers a result of \cite{MK17}.

\begin{cor} Let $p\geq3$ and $\frac{p-1}{(m,p-1)}$ is even (where $k=mp^s$, $s\geq 0$ and $(m,p-1)$ stands for common divisor of $m$ and $p-1$).
Assume that $X$ be a set defined as \eqref{X}.
If $|k|_p>|\theta-1|_p$ then the Ising mapping $f_\theta$ over the
Julia set is conjugate to the full shift $(\Sigma,\sigma)$.
\end{cor}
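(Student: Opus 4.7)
The plan is to derive this corollary as a direct specialization of Theorem \ref{mainres1} to $q=2$, combined with the characterization of $k$-th roots of $-1$ in $\bq_p$ given in \cite[Theorem 6.1]{MS13}. First I would substitute $q=2$ in \eqref{11}, noting that $f_\theta$ then reduces to the Ising mapping $\bigl(\frac{\theta x+1}{x+\theta}\bigr)^k$ and that the set $X$ of \eqref{X} specializes accordingly. The hypothesis $|q|_p=1$ of Theorem \ref{mainres1} is automatic from $p\geq 3$, since $|2|_p=1$ in that case.

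The main step is to verify the $k$-th root condition $\sqrt[k]{1-q}=\sqrt[k]{-1}\in\bq_p$. Writing $k=mp^s$ with $(m,p)=1$, I would observe that for odd $p$ the $p^s$-th power map is a bijection on $\bz_p^\times$ (by Hensel applied to $y^{p^s}+1$), so the question reduces to whether $-1$ admits an $m$-th root in $\bq_p$ or, equivalently by Hensel again, whether $-1$ is an $m$-th power in $(\bz/p\bz)^\times$. Since the $m$-th powers in this cyclic group of order $p-1$ form the unique subgroup of order $(p-1)/(m,p-1)$, and $-1$ has order exactly $2$, membership holds precisely when $(p-1)/(m,p-1)$ is even. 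This is the parity hypothesis of the corollary and is the content of \cite[Theorem 6.1]{MS13}.

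Finally, I would confirm that the numerical hypothesis $|k|_p>|\theta-1|_p$ of the corollary matches the corresponding assumption $|k|_p>p^{s(k-1)/k}|\theta-1|_p$ of Theorem \ref{mainres1} under the specialization $q=2$. I expect this reconciliation of the two bounds to be the only place requiring an explicit calculation, and hence the only real obstacle in the proof: one must check that in the Ising regime the extra factor $p^{s(k-1)/k}$ is either absorbed by the constraints on $\theta$ imposed by working over the Julia set, or is implicit in the corollary's hypotheses via the set $X$ from \eqref{X}. Once this is in place, the conclusion, namely the conjugacy of $f_\theta$ on the Julia set with the full shift $(\Sigma,\sigma)$, transfers verbatim from Theorem \ref{mainres1}.
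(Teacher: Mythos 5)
Your route is the same as the paper's: the corollary is stated there as an immediate specialization of Theorem \ref{mainres1} to $q=2$ together with \cite[Theorem 6.1]{MS13}, and your reduction of $\sqrt[k]{-1}\in\bq_p$ to the parity of $(p-1)/(m,p-1)$ (kill the $p^s$ part since $(-1)^{p^s}=-1$ for odd $p$, then apply Hensel and the index computation in the cyclic group $\mathbb F_p^\times$) is exactly the content of that cited theorem. One small inaccuracy: the $p^s$-th power map is injective but \emph{not} surjective on $\bz_p^\times$ for $s\geq 1$ (its image is $\mu_{p-1}\times(1+p^{s+1}\bz_p)$); this does not hurt you, since the only element whose $p^s$-th root you need is $-1$, which is its own root.

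The one step you leave unresolved — reconciling $|k|_p>|\theta-1|_p$ with the theorem's $|k|_p>p^{s(k-1)/k}|\theta-1|_p$ — has a one-line answer that you should not miss: the $s$ appearing in Theorem \ref{mainres1} is defined by $|q-1|_p=p^{-s}$, so for $q=2$ one has $|q-1|_p=|1|_p=1$, hence $s=0$ and the factor $p^{s(k-1)/k}$ equals $1$; the two conditions coincide identically. The letter $s$ in the corollary's hypothesis (from the factorization $k=mp^s$) is an unrelated variable reused with a clashing meaning, which is presumably what led you to speculate about the factor being ``absorbed'' by the Julia set or the set $X$ — it is not; it is simply trivial in this case. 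With that observation supplied, your argument is complete and matches the paper's.
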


In the real numbers case, analogous results with rigorous proofs are
not known in the literature. In this direction, only numerical
analysis predict the existence of chaos (see for example,
\cite{ADH,BG,Monr}). The advantage of the non-Archimedeanity of the
norm allowed us rigorously to prove the existence of the chaos (in
Devaney's sense).

We note that the dynamical properties of the fixed points of some
$p$-adic rational maps have been studied in
\cite{ARS,B0,B1,FL4,KM1,RL}. However, the global dynamical structure
of rational maps on $\bq_p$ remains unclear.  Note that some
$p$-adic chaotic dynamical systems have been studied in
\cite{FL2,Wo}.

Note that, in the $p$-adic setting, due to lack of the convex
structure of the set of $p$- adic Gibbs measures, it was quite
difficult to constitute a phase transition with some features of the
set of $p$-adic Gibbs measures. The main main Theorem \ref{mainres1}
yields that the set of $p$-adic Gibbs measures is huge. Moreover,
the advantage of the present work allows to find lots of periodic
Gibbs measures.

\section{Preliminaries}

\subsection{$p$-adic numbers}

Let $\Q$ be the field of rational numbers. For a fixed prime number
$p$, every rational number $x\ne 0$ can be represented in the form
$x = p^r{n\over m}$, where $r, n\in \Z$, $m$ is a positive integer,
and $n$ and $m$ are relatively prime with $p$: $(p, n) = 1$, $(p, m)
= 1$. The $p$-adic norm of $x$ is given by
$$|x|_p=\left\{\begin{array}{ll}
p^{-r}\ \ \mbox{for} \ \ x\ne 0\\
0\ \ \mbox{for} \ \ x = 0.
\end{array}\right.
$$
This norm is non-Archimedean  and satisfies the so called strong
triangle inequality
$$|x+y|_p\leq \max\{|x|_p,|y|_p\}.$$
We recall a nice property of the norm, i.e. if $|x|_p >|y|_p$ then $|x\pm y|_p=|x|_p$.
Note that this is a crucial property which is proper to the non-Archimedenity of the norm.
The completion of $\Q$ with respect to the $p$-adic norm defines
the $p$-adic field
 $\Q_p$. Any $p$-adic number $x\ne 0$ can be uniquely represented
in the canonical form
\begin{equation}\label{ek}
x = p^{\gamma(x)}(x_0+x_1p+x_2p^2+\dots),
\end{equation}
where $\gamma(x)\in \Z$ and the integers $x_j$ satisfy: $x_0 > 0$,
$0\leq x_j \leq p - 1$ (see \cite{Ko,VVZ}). In this case $|x|_p =
p^{-\gamma(x)}$.

For each $a\in \bq_p$, $r>0$ we denote $$ B_r(a)=\{x\in \bq_p :
|x-a|_p< r\}.$$ Elements of the set $\mathbb{Z}_p=\{x\in \Q_p:
|x|_p\leq 1\}$ are called \textit{$p$-adic integers}.

The \textit{$p$-adic exponential} is defined by
$$\exp_p(x) =\sum^\infty_{n=0}{x^n\over n!},$$
which converges for every $x\in B_{p^{-1/(p-1)}}(0)$. Denote
$$
\mathcal E_p:=\left\{
\begin{array}{ll}
B_\frac{1}{4}(1), & \mbox{if }p=2;\\
B_\frac{1}{p}(1), & \mbox{if }p\neq2.
\end{array}
\right.
$$
This set is the range of the $p$-adic exponential function. It is
known \cite{Ko} the following fact.
\begin{lemma}\label{epproperty}
The set $\mathcal E_p$ has the following properties:\\
$(a)$ $\mathcal E_p$ is a group under multiplication;\\
$(b)$ $|a-b|_p<1$ for all $a,b\in\mathcal E_p$;\\
$(c)$ If $a,b\in\mathcal E_p$ then it holds
\[
|a+b|_p=\left\{\begin{array}{ll}
\frac{1}{2}, & \mbox{if }\ p=2\\
1, & \mbox{if }\ p\neq2.
\end{array}\right.
\]\\
$(d)$ If $a\in\mathcal E_p$, then
there is an element $h\in B_{p^{-1/(p-1)}}(0)$ such that
$a=\exp_p(h)$.
\end{lemma}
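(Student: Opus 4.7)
The plan is to verify each part by direct computation using the non-Archimedean triangle inequality together with standard convergence properties of the $p$-adic exponential and logarithm. Parts (a), (b), and (c) reduce to algebraic manipulations that isolate the critical quantity $|a-1|_p$ and exploit the fact that it is strictly less than $1$ whenever $a \in \mathcal E_p$.

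Concretely, for (b) I would write $a - b = (a-1) - (b-1)$ and apply the ultrametric inequality directly. For (a), multiplicative closure follows from the identity $ab - 1 = (a-1)(b-1) + (a-1) + (b-1)$, while the existence of inverses uses the observation that $|a-1|_p < 1$ forces $|a|_p = 1$, so $a \in \mathbb{Z}_p^\times$, and then $a^{-1} - 1 = -(a-1)a^{-1}$ gives $|a^{-1} - 1|_p = |a-1|_p$. For (c), the key identity is $a + b = 2 + (a-1) + (b-1)$: in both regimes the correction terms have $p$-adic absolute value strictly less than $|2|_p$ (for odd $p$, $|a-1|_p < 1/p < 1 = |2|_p$; for $p = 2$, $|a-1|_2 < 1/4 < 1/2 = |2|_2$), so the strong triangle inequality yields $|a+b|_p = |2|_p$, which is $1$ for odd $p$ and $1/2$ for $p = 2$.

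Part (d) is the only step with genuine analytic content. I would invoke the $p$-adic logarithm $\log_p(1+y) = \sum_{n \geq 1} (-1)^{n-1} y^n / n$, which converges on the disk $|y|_p < 1$, together with the classical isometry $|\log_p(1+y)|_p = |y|_p$ valid for $|y|_p < p^{-1/(p-1)}$ and the mutual inverse relation $\exp_p(\log_p(1+y)) = 1 + y$ on the same disk. The task then reduces to checking that the defining condition of $\mathcal E_p$ places $|a-1|_p$ strictly below $p^{-1/(p-1)}$: for odd $p$ one has $|a-1|_p < 1/p < p^{-1/(p-1)}$ (since $1/(p-1) < 1$), and for $p = 2$ one has $|a-1|_2 < 1/4 < 1/2 = 2^{-1/(2-1)}$. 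Setting $h := \log_p(a)$ then yields $h \in B_{p^{-1/(p-1)}}(0)$ with $\exp_p(h) = a$. The only delicate point is keeping the two separate radii for $p = 2$ and odd $p$ straight; beyond that, the argument is entirely mechanical.
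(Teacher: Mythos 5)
Your proposal is correct. Note, however, that the paper does not actually prove this lemma: it is stated as a known fact with a citation to Koblitz, so there is no internal proof to compare against. Your argument is the standard textbook verification and fills that gap completely: the identities $ab-1=(a-1)(b-1)+(a-1)+(b-1)$, $a^{-1}-1=-(a-1)a^{-1}$ (using $|a|_p=1$, which follows from $|a-1|_p<1$), and $a+b=2+(a-1)+(b-1)$ do exactly what is needed for (a)--(c), and for (d) the reduction to the isometry and inversion properties of $\log_p$ and $\exp_p$ on the disk $|y|_p<p^{-1/(p-1)}$ is the right move, with the radius check $1/p<p^{-1/(p-1)}$ (resp.\ $1/4<1/2$ for $p=2$) handled correctly. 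Two small remarks: the phrase ``on the same disk'' for the relation $\exp_p(\log_p(1+y))=1+y$ should refer to the smaller disk $|y|_p<p^{-1/(p-1)}$ (on all of $|y|_p<1$ the composition can fail to be the identity), which is indeed where you apply it; and your proof of (d) establishes the containment $\mathcal E_p\subseteq\exp_p\bigl(B_{p^{-1/(p-1)}}(0)\bigr)$, which is all the lemma asserts, even though the paper's surrounding remark that $\mathcal E_p$ \emph{is} the range of $\exp_p$ sits slightly awkwardly with its strict-inequality definition of the balls.
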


\begin{lemma}\label{alpbetgam}
Let $k\geq2$ and $p\geq3$. Then for any $\alpha,\beta\in\mathcal E_p$ there exists
$\gamma\in\mathcal E_p$ such that
\begin{equation}\label{abg}
\sum_{j=0}^{k-1}\alpha^{k-1-j}\beta^j=k\gamma
\end{equation}
\end{lemma}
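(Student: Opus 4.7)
My plan is to reduce the two-variable claim to a one-variable estimate by the substitution $\alpha=\beta(1+t)$, and then expand the resulting rational expression as a polynomial in $t$ and bound each coefficient $p$-adically.

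Concretely, I would set $t:=\alpha/\beta-1$. Since $\mathcal{E}_p$ is a multiplicative group by Lemma \ref{epproperty}(a) and $|\beta|_p=1$, one gets
$$|t|_p=|\alpha-\beta|_p\le\max\{|\alpha-1|_p,|\beta-1|_p\}<1/p,$$
which upgrades to $|t|_p\le p^{-2}$ because the $p$-adic valuation is discrete. Factoring out $\beta^{k-1}$ rewrites the sum as
$$\sum_{j=0}^{k-1}\alpha^{k-1-j}\beta^j=\beta^{k-1}\sum_{i=0}^{k-1}(1+t)^i=\beta^{k-1}\cdot\frac{(1+t)^k-1}{t},$$
so I would declare $\gamma:=\beta^{k-1}\cdot\frac{(1+t)^k-1}{kt}$ (interpreted as $\beta^{k-1}$ when $t=0$). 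Since $\beta^{k-1}\in\mathcal{E}_p$ and $\mathcal{E}_p$ is closed under multiplication, the whole lemma reduces to the one-variable assertion $\frac{(1+t)^k-1}{kt}\in\mathcal{E}_p$.

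For this reduced problem I would use the elementary identity $\binom{k}{m+1}/k=\binom{k-1}{m}/(m+1)$ to expand
$$\frac{(1+t)^k-1}{kt}=1+\sum_{m=1}^{k-1}\frac{\binom{k-1}{m}}{m+1}\,t^m,$$
and show every nonconstant summand has $p$-adic norm strictly below $1/p$. Using $|\binom{k-1}{m}|_p\le 1$, the standard bound $|m+1|_p\ge 1/(m+1)$, and $|t|_p\le p^{-2}$, the $m$-th term is controlled by $(m+1)\,p^{-2m}$, and a short elementary check gives $(m+1)p^{-2m}<1/p$ for every $m\ge 1$ and $p\ge 3$.

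The main obstacle is precisely the potential loss from $1/(m+1)$ when $p\mid m+1$; this is exactly compensated by the fact that the strict inequality $|t|_p<1/p$ forces the \emph{two}-power saving $|t|_p\le p^{-2}$, which supplies the extra factor of $p^{-m}$ needed to kill the denominator. Beyond this, everything is a direct manipulation using the group structure of $\mathcal{E}_p$ from Lemma \ref{epproperty}.
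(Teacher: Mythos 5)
Your proof is correct, and at the structural level it is the same route the paper takes: factor out $\beta^{k-1}$, pass to the ratio $\delta=\alpha/\beta$, write the geometric sum as $\frac{\delta^k-1}{\delta-1}$, expand binomially, and bound the tail termwise. The only substantive difference is in that termwise bound, and it is worth flagging. The paper keeps the expansion in the form $k+\sum_{j=2}^{k}\binom{k}{j}(\delta-1)^{j-1}$ and shows each tail term has norm $<|k|_p$ via the factorial estimate $|j!|_p>p^{-j/(p-1)}\ge p^{1-j}$; this needs only $|\delta-1|_p\le 1/p$. You divide by $k$ first, use $\binom{k}{m+1}/k=\binom{k-1}{m}/(m+1)$, and absorb the denominator $m+1$ with the crude bound $|m+1|_p\ge 1/(m+1)$, which you can only afford because of the two-power saving $|t|_p\le p^{-2}$. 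That saving comes from reading $\mathcal{E}_p=B_{1/p}(1)$ literally as an open ball, i.e.\ $\mathcal{E}_p=1+p^2\mathbb{Z}_p$. The paper, however, manifestly treats $\mathcal{E}_p$ as $1+p\mathbb{Z}_p$ elsewhere (in Lemma \ref{contrac} the relevant elements are only shown to be within distance $\le 1/p$ of $1$, and the paper's own proof of this lemma assumes only $|\delta-1|_p\le 1/p$). Under that closed-ball reading your estimate $(m+1)p^{-m}\le p^{-1}$ fails at $m=1$; the repair is one line ($|2|_p=1$ for $p\ge 3$, and $m+1\le p^{m-1}$ for $m\ge 2$), but as written your argument does not cover it. So: correct against the stated definition, essentially the paper's argument with a slightly less robust tail estimate than the factorial bound the authors use.
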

\begin{proof}
Let $p\geq3$. Take any $\alpha,\beta\in\mathcal E_p$. If $\alpha=\beta$ then
we get
$$
\sum_{j=0}^{k-1}\alpha^{k-1-j}\beta^j=k\alpha^{k-1}
$$
According to Lemma \ref{epproperty} we get $\alpha^{k-1}\in\mathcal E_p$. So, in this case
we may take
$\gamma=\alpha^{k-1}$.

Now, we assume that $\alpha\neq\beta$.
Then according to Lemma \ref{epproperty} it holds $\frac{\alpha}{\beta}\in\mathcal E_p$.
For convenience we denote $\delta=\frac{\alpha}{\beta}$.

We show that $p^{1-n}<|n!|_p$ for any $n\geq2$. Indeed, it is clear
that
\begin{equation}\label{np}
|n!|_p>p^{-\frac{n}{p}(1+\frac{1}{p}+\frac{1}{p^2}\cdots)}=p^{-\frac{n}{p-1}}
\end{equation}
Noting $p\geq3$, we get $p^{-\frac{n}{p-1}}\geq p^{1-n}$ for any
$n\geq2$, which due to \eqref{np} implies $|n!|_p>p^{1-n}$ for any
$n\geq2$. Consequently, from $|\delta-1|_p\leq\frac{1}{p}$ one gets
\begin{equation}\label{delta}
|(\delta-1)^{n-1}|_p<|n!|_p,\ \ \ \mbox{for any }n\geq2
\end{equation}
On the other hand, we have
$$
\frac{|k!|_p}{|(k-n)!|_p}\leq|k|_p,\ \ \ \mbox{for any }2\leq n\leq k
$$
The last one with \eqref{delta} yields
\begin{equation}\label{ckn}
|C_k^n(\delta-1)^{n-1}|_p<|k|_p,\ \ \ \mbox{for any }2\leq n\leq k
\end{equation}
The equality
$$
\sum_{j=0}^{k-1}\alpha^{k-1-j}\beta^j=\alpha^{k-1}\sum_{j=0}^{k-1}\delta^j=\alpha^{k-1}\frac{\delta^k-1}{\delta-1}=
\alpha^{k-1}\left[k+\sum_{j=2}^kC_k^j(\delta-1)^{j-1}\right]
$$
with \eqref{ckn} and strong triangle inequality implies
$$
\left|\sum_{j=2}^kC_k^j(\delta-1)^{j-1}\right|_p<|k|_p
$$
Hence,
$$
1+\frac{1}{k}\sum_{j=2}^kC_k^j(\delta-1)^{j-1}\in\mathcal E_p
$$
Choosing $\gamma\in\mathcal E_p$ as follows
$$
\gamma=\alpha^{k-1}\left[1+\frac{1}{k}\sum_{j=2}^kC_k^j(\delta-1)^{j-1}\right]
$$
we obtain \eqref{abg}. This completes the proof.
\end{proof}

\subsection{$p$-adic subshift}
Let $f:X\to\mathbb Q_p$ be a map from a compact
open set $X$ of $\mathbb Q_p$ into $\mathbb Q_p$. We assume that (i) $f^{-1}(X)\subset X$;
(ii) $X=\cup_{j\in I}B_{r}(a_j)$ can be written as a finite disjoint
union of balls of centers $a_j$ and of the same radius $r$ such that for each $j\in I$ there is an integer
$\tau_j\in\mathbb Z$ such that
\begin{equation}\label{tau}
|f(x)-f(y)|_p=p^{\tau_j}|x-y|_p,\ \ \ \ x,y\in B_r(a_j).
\end{equation}
For such a map $f$, define its Julia set by
\begin{equation}\label{J}
J_f=\bigcap_{n=0}^\infty f^{-n}(X).
\end{equation}
It is clear that $f^{-1}(J_f)=J_f$ and then $f(J_f)\subset J_f$. We
will study the dynamical system $(J_f,f)$. The triple $(X,J_f,f)$ is
called a $p$-adic {\it repeller} if all $\tau_j$ in \eqref{tau} are
positive.
 For any $i\in I$, we let
$$
I_i:=\left\{j\in I: B_r(a_j)\cap
f(B_r(a_i))\neq\varnothing\right\}=\{j\in I: B_r(a_j)\subset
f(B_r(a_i))\}
$$
(the second equality holds because of the expansiveness and of the
ultrametric property). Then define a matrix $A=(a_{ij})_{I\times
I}$, called \textit{incidence matrix} as follows
$$
a_{ij}=\left\{\begin{array}{ll}
1,\ \ \mbox{if }\ j\in I_i;\\
0,\ \ \mbox{if }\ j\not\in I_i.
\end{array}
\right.
$$
If $A$ is irreducible, we say that $(X,J_f,f)$ is
\textit{transitive}. Here the irreducibility of $A$  means, for any
pair $(i,j)\in I\times I$ there is positive integer $m$ such that
$a_{ij}^{(m)}>0$, where $a_{ij}^{(m)}$ is the entry of the matrix
$A^m$.

Given $I$ and the irreducible incidence matrix $A$ as above. Denote
$$
\Sigma_A=\{(x_k)_{k\geq 0}: \ x_k\in I,\  A_{x_k,x_{k+1}}=1, \ k\geq
0\}
$$
which is the corresponding subshift space, and let $\sigma$ be the
shift transformation on $\Sigma_A$. We equip $\Sigma_A$ with a
metric $d_f$ depending on the dynamics which is defined as follows.
First for $i,j\in I,\ i\neq j$ let $\k(i,j)$ be the integer such
that $|a_i-a_j|_p=p^{-\k(i,j)}$. It clear that $\k(i,j)<\tau$. By
the ultra-metric inequality, we have
$$
|x-y|_p=|a_i-a_j|_p\ \ \ i\neq j,\ \forall x\in B_r(a_i), \forall
y\in B_r(a_j)
$$
For $x=(x_0,x_1,\dots,x_n,\dots)\in\Sigma_A$ and
$y=(y_0,y_1,\dots,y_n,\dots)\in\Sigma$, define
$$
d_f(x,y)=\left\{\begin{array}{ll}
p^{-\tau_{x_0}-\tau_{x_1}-\cdots-\tau_{x_{n-1}}-\k(x_{n},y_{n})}&, \mbox{ if }n\neq0\\
p^{-\k(x_0,y_0)}&, \mbox{ if }n=0
\end{array}\right.
$$
where $n=n(x,y)=\min\{i\geq0: x_i\neq y_i\}$. It is clear that $d_f$
defines the same topology as the classical metric which is defined
by $d(x,y)=p^{-n(x,y)}$.

\begin{thm}\cite{FL2}\label{xit} Let $(X,J_f,f)$ be a transitive $p$-adic weak repeller with incidence matrix $A$.
Then the dynamics $(J_f,f,|\cdot|_p)$ is isometrically conjugate to
the shift dynamics $(\Sigma_A,\sigma,d_f)$.
\end{thm}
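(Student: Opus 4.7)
The plan is to invoke Theorem \ref{xit} with the incidence matrix equal to the all-ones matrix, which will immediately yield conjugacy to the full shift $(\Sigma,\sigma)$. To this end I must (i) exhibit $X$ as a disjoint union of balls $\bigsqcup_{j\in I} B_r(a_j)$; (ii) verify the local affinity condition \eqref{tau} with positive exponents $\tau_j$, so that $(X,J_{f_\theta},f_\theta)$ is a $p$-adic repeller; and (iii) show that $f_\theta$ sends each ball $B_r(a_j)$ surjectively onto $X$, so that every entry of the incidence matrix equals $1$.

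The key structural observation is that $f_\theta = P_k\circ g$, where $P_k(z)=z^k$ and $g(x)=(\theta x+q-1)/(x+\theta+q-2)$ is a Möbius map satisfying
\[
g(x)-g(y)=\frac{(\theta-1)(\theta+q-1)(x-y)}{(x+\theta+q-2)(y+\theta+q-2)}.
\]
The set $X$ from \eqref{X} should be chosen so that on each component ball $B_r(a_j)$ the image $g(B_r(a_j))$ lies in $\zeta_j\,\mathcal{E}_p$ for some $k$-th root of unity $\zeta_j\in\mathbb{Q}_p$; the assumption $\sqrt[k]{1-q}\in\mathbb{Q}_p$ is precisely what produces the centers $a_j\in\mathbb{Q}_p$, namely as the preimages of the fixed point $x=1$ solving $g(x)=\zeta$. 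Applying Lemma \ref{alpbetgam} to $\alpha=\zeta_j^{-1}g(x)$ and $\beta=\zeta_j^{-1}g(y)$ converts the $k$-th power into a clean scaling
\[
|g(x)^k-g(y)^k|_p=|k|_p\,|g(x)-g(y)|_p,\qquad x,y\in B_r(a_j).
\]
Combining this with the Möbius identity and using $|q|_p=1$ (so that $|\theta+q-1|_p=1$ and $|x+\theta+q-2|_p$ takes a common value on each ball) yields $|f_\theta(x)-f_\theta(y)|_p = p^{\tau_j}|x-y|_p$ for an exponent $\tau_j$ depending only on $|k|_p$, $|\theta-1|_p$ and $r$. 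The hypothesis $|k|_p>p^{s(k-1)/k}|\theta-1|_p$ is the exact threshold that forces $\tau_j>0$ uniformly in $j$. For (iii), $g$ carries each $B_r(a_j)$ bijectively onto a disk around $\zeta_j$, and the $k$-to-$1$ wrapping of $P_k$ on this disk sweeps out an image of radius large enough to contain every $B_r(a_i)$, giving the all-ones incidence matrix. Irreducibility is then trivial, $\Sigma_A=\Sigma$, and Theorem \ref{xit} delivers the isometric conjugacy $(J_{f_\theta},f_\theta,|\cdot|_p)\cong(\Sigma,\sigma,d_f)$.

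The main obstacle is the precise calibration hidden in (ii)–(iii). The unusual exponent $p^{s(k-1)/k}$ (with $k=mp^s$) reflects the wild ramification of the $k$-th power map when $p\mid k$: a naive use of Lemma \ref{alpbetgam} only controls the regime $z\in\mathcal{E}_p$ where $|z^k-1|_p=|k|_p|z-1|_p$, but the balls of $X$ must be taken right at the boundary of this regime—large enough that $f_\theta(B_r(a_j))$ surjects onto $X$, yet small enough that the expansion factor remains $|k|_p/|\theta-1|_p$ and is not degraded by the higher-order binomial contributions of $(1+t)^k$ that kick in once $|t|_p$ exceeds $p^{-s/(k-1)}$ or the like. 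Establishing this two-sided calibration, together with the mutual disjointness of the balls centered at the distinct preimages of $1$ and the constancy of $|x+\theta+q-2|_p$ across $X$, is the technical heart of the proof.
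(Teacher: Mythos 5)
Your proposal does not prove the statement you were asked to prove. The statement is Theorem \ref{xit} itself --- the general coding theorem asserting that \emph{any} transitive $p$-adic weak repeller $(X,J_f,f)$ is isometrically conjugate to the subshift $(\Sigma_A,\sigma,d_f)$. This is a result of Fan, Liao, Wang and Zhou which the paper quotes from \cite{FL2} without proof. What you have written is instead a sketch of the proof of Theorem \ref{mainres1}: you \emph{invoke} Theorem \ref{xit} as a black box (``Theorem \ref{xit} delivers the isometric conjugacy'') and devote your effort to verifying its hypotheses for the specific Potts--Bethe map $f_\theta$ --- the decomposition of $X$ into balls, the scaling condition \eqref{tau}, and the all-ones incidence matrix. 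That material corresponds to Propositions \ref{prop12}, \ref{prop13} and Corollary \ref{corol2} of the paper, not to Theorem \ref{xit}; using the target statement as an ingredient makes the argument circular as a proof of that statement.

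A proof of the actual statement must be carried out for an arbitrary map satisfying conditions (i)--(ii) of Section 2.2, with no reference to $f_\theta$, $\theta$, $q$ or $k$. The standard route is to define the itinerary map $h:J_f\to\Sigma_A$ by recording which ball $B_r(a_{j_n})$ contains $f^n(x)$; one checks that the resulting sequence is $A$-admissible by the definition of the incidence matrix, that $h$ is injective because the exponents $\tau_j>0$ force two distinct points with the same itinerary to separate under iteration (contradicting the boundedness of $X$), that $h$ is surjective by a nested-balls compactness argument showing every admissible sequence determines a decreasing sequence of nonempty closed balls with nonempty intersection, that $h\circ f=\sigma\circ h$ holds by construction, and that $h$ is an isometry from $(J_f,|\cdot|_p)$ to $(\Sigma_A,d_f)$ --- which is precisely why the metric $d_f$ is built from the exponents $\tau_{x_0},\dots,\tau_{x_{n-1}}$ and the separations $\kappa(x_n,y_n)$. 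None of these steps appears in your proposal.
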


\section{The fixed points of the Potts-Bethe function}

In this section, for a given a integer $q\geq2$ we study fixed
points of the Potts-Bethe mapping
\begin{equation}\label{func}
f_\theta(x)=\left(\frac{\theta x+q-1}{x+\theta+q-2}\right)^k,\ \ \
\theta\in\mathcal E_p.
\end{equation}
In what follows we assume that $p\geq3$ and $|q|_p=1$.

Let $x^{(0)}$ be a fixed point of an analytic function $f(x)$ and
$$
\lambda=\frac{\partial}{\partial x}f(x^{(0)}).
$$
The fixed point $x^{(0)}$ is called {\it attractive} if
$0<|\lambda|_p<1$, {\it indifferent} if $|\lambda|_p$, and {\it
repelling} if $|\lambda|_p>1$ (see \cite{AKh}).

Let $x^{(0)}$ be an attractive fixed point of the function $f$.
We define basin of attraction of this point as follows
$$
A(x^{(0)})=\{x: \lim_{n\to\infty}f^n(x)=x^{(0)}\}
$$
where $f^n=\underbrace{f\circ f\circ\dots\circ f}_n$

\begin{lemma}\label{contrac1}
Let $|q|_p=1$. Then $f_\theta(\mathcal E_p)\subset\mathcal E_p$ and
$f_\theta$ is a contraction on $\mathcal E_p$.
\end{lemma}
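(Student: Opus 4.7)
The plan is to first show invariance of $\mathcal{E}_p$, then extract an explicit contraction constant via a direct algebraic identity.

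For invariance, I would parametrize $x\in\mathcal E_p$ and $\theta\in\mathcal E_p$ by $x = 1+b$, $\theta = 1+a$ with $|a|_p,|b|_p\leq 1/p$ (Lemma \ref{epproperty}), and introduce
$$u(x) = \frac{\theta x + q - 1}{x + \theta + q - 2}.$$
Direct substitution gives numerator $q + a + b + ab$ and denominator $q + a + b$, so
$$u(x) - 1 = \frac{ab}{q + a + b}.$$
Since $|q|_p = 1 > |a+b|_p$, the non-Archimedean property forces $|q+a+b|_p = 1$, and then $|u(x)-1|_p \leq 1/p^{2} \leq 1/p$, giving $u(x)\in\mathcal E_p$. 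Because $\mathcal E_p$ is closed under multiplication (Lemma \ref{epproperty}(a)), $f_\theta(x) = u(x)^k \in \mathcal E_p$.

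For contraction, the key is the algebraic identity obtained by clearing denominators in $u(x)-u(y)$ and simplifying: after expansion the cross-term $\theta xy$ cancels and one is left with
$$u(x) - u(y) = \frac{(\theta-1)(\theta + q - 1)(x-y)}{(x+\theta+q-2)(y+\theta+q-2)}.$$
As in the invariance argument, each factor in the denominator has norm $|q|_p = 1$, and likewise $|\theta + q - 1|_p = |q|_p = 1$. Hence $|u(x)-u(y)|_p = |\theta-1|_p\,|x-y|_p$.

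Finally, writing $f_\theta(x) - f_\theta(y) = (u(x)-u(y))\sum_{j=0}^{k-1} u(x)^{k-1-j}u(y)^j$ and applying Lemma \ref{alpbetgam} to $\alpha = u(x)$, $\beta = u(y)$ (which are in $\mathcal E_p$ by the first step), the sum equals $k\gamma$ with $\gamma\in\mathcal E_p$, so its norm is $|k|_p$. Combining,
$$|f_\theta(x) - f_\theta(y)|_p = |k|_p\,|\theta - 1|_p\,|x - y|_p \leq \tfrac{1}{p}\,|x-y|_p,$$
which is a strict contraction. The main obstacle is the careful bookkeeping of the identity for $u(x)-u(y)$ and verifying that the ``offending'' linear and constant terms in the expansion cancel to leave exactly the factor $(\theta-1)(\theta+q-1)$; everything else is a direct application of the strong triangle inequality together with Lemmas \ref{epproperty} and \ref{alpbetgam}.
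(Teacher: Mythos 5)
Your proof is correct and follows essentially the same route as the paper: the paper likewise shows invariance by rewriting the fraction so that numerator and denominator lie in $\mathcal E_p$ (dividing through by $q$ rather than expanding in $a=\theta-1$, $b=x-1$), and its contraction step is exactly your identity $|f_\theta(x)-f_\theta(y)|_p=|k(\theta-1)|_p\,|x-y|_p$, which the paper merely asserts and you derive in full via the factorization of $u(x)-u(y)$ and Lemma \ref{alpbetgam}. Your version just supplies the bookkeeping the paper omits.
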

\begin{proof}
Let $|q|_p=1$. Pick any $x\in\mathcal E_p$. According to Lemma
\ref{epproperty} we get
$$
f_\theta(x)=\left(\frac{1+\frac{\theta x-1}{q}}{1+\frac{\theta-1+x-1}{q}}\right)^k\in\mathcal E_p
$$
which means that $f_\theta(\mathcal E_p)\subset\mathcal E_p$.

Using non-Archimedean norm's property one has
\begin{equation}\label{contf}
\frac{|f_\theta(x)-f_\theta(y)|_p}{|x-y|_p}=|k(\theta-1)|_p,\ \ \ \mbox{for any pair }x,y\in\mathcal E_p
\end{equation}
Noting $|k|_p\leq1$ and $|\theta-1|_p<1$ from \eqref{contf}
we obtain
$$
|f_\theta(x)-f_\theta(y)|_p<|x-y|_p
$$
which is equivalent to the contractivity of $f_\theta$ (In the real
case it is not true. But, in the $p$-adic case, a function $f: X\to
X$ for some compact subset $X$ of $\mathbb Q_p$ be a contraction if
and only if $|f(x)-f(y)|_p<|x-y|_p$ for any $x,y\in X$).
\end{proof}
Let us denote
$$
\begin{array}{ll}
B_{q,\theta}^{(1)}=\{x\in\mathbb Q_p: |x+q-1|_p>|\theta-1|_p\}\\[2mm]
B_{q,\theta}^{(2)}=\{x\in\mathbb Q_p: |x+q-1|_p=|\theta-1|_p\}\\[2mm]
B_{q,\theta}^{(3)}=\{x\in\mathbb Q_p: |x+q-1|_p<|\theta-1|_p\}
\end{array}
$$
Note that $\mathcal E_p\subset B_{q,\theta}^{(1)}$.
\begin{lemma}\label{contrac}
Let $p\geq3$. Then
$f_\theta(B_{q,\theta}^{(1)})\subset\mathcal E_p$.
\end{lemma}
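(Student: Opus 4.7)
The plan is to factor $f_\theta = g^k$ with $g(x) = (\theta x + q - 1)/(x + \theta + q - 2)$, bound $|g(x)-1|_p$ by a non-Archimedean case analysis, and then promote the bound to $g(x)^k$ via Lemma~\ref{alpbetgam}. The starting algebraic identity is
\[
g(x) - 1 = \frac{(\theta-1)(x-1)}{x + \theta + q - 2},
\]
combined with the decomposition $x + \theta + q - 2 = (x + q - 1) + (\theta - 1)$. Since $x \in B_{q,\theta}^{(1)}$ means $|x+q-1|_p > |\theta-1|_p$, the strong triangle inequality yields $|x+\theta+q-2|_p = |x+q-1|_p$, so
\[
|g(x) - 1|_p = \frac{|\theta - 1|_p \, |x - 1|_p}{|x + q - 1|_p}.
\]

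The key step is to show that the right-hand side is at most $1/p$, which I would do by splitting on the size of $|x+q-1|_p$. When $|x+q-1|_p \ge 1$, using $|q|_p = 1$ and $x-1 = (x+q-1) - q$ one obtains $|x-1|_p \le |x+q-1|_p$, hence $|g(x)-1|_p \le |\theta-1|_p < 1/p$. When $|x+q-1|_p < 1$, the same relation $x - 1 = (x+q-1) - q$ combined with $|x+q-1|_p < 1 = |q|_p$ forces $|x-1|_p = 1$, so that $|g(x)-1|_p = |\theta-1|_p / |x+q-1|_p$; the hypothesis $|\theta-1|_p < |x+q-1|_p$ together with the discreteness of the $p$-adic value group $p^{\mathbb{Z}}$ then forces this ratio to be bounded by $1/p$. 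In both cases one gets $g(x) \in \mathcal{E}_p$.

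To finish, I would write
\[
f_\theta(x) - 1 = g(x)^k - 1 = (g(x) - 1) \sum_{j=0}^{k-1} g(x)^{k-1-j} \cdot 1^{j},
\]
and apply Lemma~\ref{alpbetgam} with $\alpha = g(x)$ and $\beta = 1$ to replace the geometric sum by $k\gamma$ for some $\gamma \in \mathcal{E}_p$; Lemma~\ref{epproperty}(c) then gives $|\gamma|_p = 1$. Therefore
\[
|f_\theta(x) - 1|_p = |k|_p \, |g(x) - 1|_p \le |k|_p / p \le 1/p,
\]
which completes the verification that $f_\theta(x) \in \mathcal{E}_p$.

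The main obstacle I anticipate is the second sub-case, where $|x+q-1|_p < 1$ and the quantity $|\theta-1|_p/|x+q-1|_p$ can actually saturate the bound $1/p$; the discreteness of the value group is essential there, and Lemma~\ref{alpbetgam} is precisely what allows this borderline estimate to survive intact under the $k$-th power. Everything else is routine ultrametric bookkeeping built around the single algebraic identity for $g(x)-1$.
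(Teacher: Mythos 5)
Your proof is correct, and it follows the same overall strategy as the paper --- establish that the unraised fraction $g(x)=\frac{\theta x+q-1}{x+\theta+q-2}$ lies in $\mathcal E_p$ and then pass to the $k$-th power --- but the key estimate is executed differently. The paper divides numerator and denominator by $x+q-1$, exhibiting $g(x)$ as a quotient of $\theta-\frac{(\theta-1)(q-1)}{x+q-1}$ by $1+\frac{\theta-1}{x+q-1}$; since $|\theta-1|_p/|x+q-1|_p<1$ and $|q-1|_p\le 1$, both factors visibly lie in $\mathcal E_p$, and the multiplicative group property of $\mathcal E_p$ (Lemma \ref{epproperty}(a)) finishes the argument with no case analysis. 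You instead bound $|g(x)-1|_p$ directly from the identity $g(x)-1=\frac{(\theta-1)(x-1)}{x+\theta+q-2}$, which forces a case split on whether $|x+q-1|_p\ge 1$ and an appeal to discreteness of the value group in the borderline subcase; this is a valid alternative, just with more ultrametric bookkeeping. One remark: your final step via Lemma \ref{alpbetgam} is heavier than necessary --- once $g(x)\in\mathcal E_p$ is known, $g(x)^k\in\mathcal E_p$ follows at once from the group property of $\mathcal E_p$, so there is no need to track the factor $|k|_p$ (and doing so also quietly assumes $k\ge 2$, which Lemma \ref{alpbetgam} requires).
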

\begin{proof}
Take any $x^*\in B_{q,\theta}^{(1)}$. Since $\theta\in\mathcal E_p$
and $|q|_p\leq1$ we find
$$
\theta+\frac{(\theta-1)(q-1)}{x^*+q-1}\in\mathcal E_p,\ \ \ 1+\frac{\theta-1}{x^*+q-1}\in\mathcal E_p
$$
Then according to Lemma \ref{epproperty} one gets
$$
f_\theta(x^*)=\left(\frac{\theta x^*+q-1}{x^*+\theta+q-2}\right)^k
=\left(\frac{\theta+\frac{(\theta-1)(q-1)}{x^*+q-1}}{1+\frac{\theta-1}{x^*+q-1}}\right)^k\in\mathcal E_p
$$
The arbitrariness of $x^*$ in $B_{q,\theta}^{(1)}$ yields that
$f_\theta(B_{q,\theta}^{(1)})\subset\mathcal E_p$.
\end{proof}

Let $|q-1|_p=p^{-s},\ s\geq0$.
We define the set
$$
\mbox{Sol}_p(x^k+q-1)=\left\{p^{-\frac{s}{k}}\xi\in\mathbb F_p: \xi^k+q-1\equiv0(\operatorname{mod }p^{s+1})\right\}
$$
If $\mbox{Sol}_p(x^k+q-1)\neq\varnothing$, we then denote
$\kappa_p:=|\mbox{Sol}_p(x^k+q-1)|$.
\begin{lemma}\label{tushinarsiz}
Let $p\geq3$ and $|q-1|_p=p^{-s},\ s\geq0$. Then
$\mbox{Sol}_p(x^k+q-1)\neq\varnothing$ if and only if there exists a
$p$-adic integer $\eta$ such that $|\eta^k+q-1|_p<|q-1|_p$.
\end{lemma}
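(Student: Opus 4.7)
The two conditions in the lemma are essentially reformulations of one another via the non-Archimedean absolute value: being an approximate root of $x^k+q-1$ modulo $p^{s+1}$ versus shrinking $|\eta^k+q-1|_p$ strictly below $|q-1|_p=p^{-s}$. My plan is to establish each implication separately, with the ultrametric inequality providing both the valuation of any such $\eta$ and the passage between congruences and absolute-value bounds.

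For the forward direction ($\Rightarrow$), I would take any element of $\mbox{Sol}_p(x^k+q-1)$, unfolding the parameterization $p^{-s/k}\xi\in\mathbb{F}_p$ as a $p$-adic integer $\eta$ whose leading digit is $\xi$ and whose valuation is $s/k$. The defining congruence $\xi^k+q-1\equiv 0\pmod{p^{s+1}}$ translates verbatim into $|\eta^k+q-1|_p\leq p^{-(s+1)}<p^{-s}=|q-1|_p$. This direction is a tautological translation once the notation is unpacked.

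For the converse ($\Leftarrow$), starting from $\eta\in\mathbb{Z}_p$ with $|\eta^k+q-1|_p<|q-1|_p$, the key step is the non-Archimedean identity applied to $\eta^k=-(q-1)+(\eta^k+q-1)$: since the two summands have different norms, $|\eta^k|_p=|q-1|_p=p^{-s}$, which forces $|\eta|_p=p^{-s/k}$ and therefore $k\mid s$, say $s=km$. I would then write $\eta=p^{m}u$ with $u\in\mathbb{Z}_p^{\times}$ and let $\xi\in\{1,\dots,p-1\}$ be the reduction of $u$ modulo $p$. From $u\equiv\xi\pmod{p}$ I get $u^k\equiv\xi^k\pmod{p}$, hence $\eta^k=p^{s}u^k\equiv p^{s}\xi^k\pmod{p^{s+1}}$, and combining with the hypothesis $\eta^k+q-1\equiv 0\pmod{p^{s+1}}$ produces the required congruence that realizes $\xi$ (i.e. the class $p^{-s/k}\xi\in\mathbb{F}_p$) as an element of $\mbox{Sol}_p(x^k+q-1)$.

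The delicate point — and the main thing that deserves attention — is extracting the exponent $s/k$ in the backward direction, because the lemma tacitly requires $k\mid s$ for $\mbox{Sol}_p$ to be nonempty; this divisibility is forced by the equality $|\eta|_p^{k}=|q-1|_p=p^{-s}$ together with the fact that $p$-adic valuations are integers. Beyond this observation, both implications are one-step consequences of the ultrametric property, and the identification between the reduced unit part of $\eta$ and the representative $\xi$ parameterizing $\mbox{Sol}_p$ wraps up the argument without further input.
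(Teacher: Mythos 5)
Your proof is correct and follows essentially the same route as the paper's: both directions reduce to unpacking the definition of $\mbox{Sol}_p(x^k+q-1)$ and applying the ultrametric inequality to pass between the congruence modulo $p^{s+1}$ and the bound $|\eta^k+q-1|_p<|q-1|_p$. Your version is in fact more careful than the paper's two-line argument (which even slips in writing $|\eta|_p=|q-1|_p$ rather than $|\eta|_p^k=|q-1|_p$), since you make explicit the forced divisibility $k\mid s$ and the identification of the unit part of $\eta$ with the representative $\xi$.
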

\begin{proof}
From the definition of the set $\mbox{Sol}_p(x^k+q-1)$ we can easily
see that $\mbox{Sol}_p(x^k+q-1)\neq\varnothing$ if and only if
$|\eta^k+q-1|_p\leq\frac{1}{p^{s+1}}$ for some $p$-adic number
$\eta$. Since $|q-1|_p=p^{-s}$ one gets $|\eta^k+q-1|_p<|q-1|_p$.
From the non-Archimedean norm's
 property we obtain $|\eta|_p=|q-1|_p$ which means that $\eta$ is a $p$-adic integer.
\end{proof}

\begin{pro}\label{prop11}
Let $p\geq3$ and $|\theta-1|_p<|q-1|_p$. Assume that $\mbox{Sol}_p(x^k+q-1)=\varnothing$. Then $f_\theta$
has a unique fixed point $x_0=1$. Moreover, $A(x_0)=\mbox{Dom}(f_\theta)$.
\end{pro}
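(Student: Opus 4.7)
The plan is to prove uniqueness of the fixed point and global convergence of orbits separately. For uniqueness, I would substitute $y=(\theta x+q-1)/(x+\theta+q-2)$, so that the fixed-point equation $f_\theta(x)=x$ becomes the conjunction $x=y^k$ and $y(x+\theta+q-2)=\theta x+q-1$; eliminating $x$ yields the polynomial equation
\[
y^{k+1}-\theta y^k+(\theta+q-2)y-(q-1)=0.
\]
Since $y=1$ is manifestly a root (producing $x=1$), polynomial division factors the left-hand side as $(y-1)g(y)$ with
\[
g(y)=y^k+(q-1)-(\theta-1)\bigl(y+y^2+\cdots+y^{k-1}\bigr).
\]
To complete the uniqueness proof I would show $g$ has no zero in $\mathbb Q_p$ by splitting on $|y|_p$. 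If $|y|_p\leq 1$, Lemma \ref{tushinarsiz} together with the hypothesis $\mbox{Sol}_p(x^k+q-1)=\varnothing$ yields $|y^k+q-1|_p\geq|q-1|_p$, while the perturbation term has $p$-adic norm at most $|\theta-1|_p<|q-1|_p$; the strong triangle inequality then forces $|g(y)|_p\geq|q-1|_p>0$. If $|y|_p>1$, the monomial $y^k$ strictly dominates the constant $q-1$ and every summand $(\theta-1)y^j$ (since $|\theta-1|_p<1\leq|y|_p$ and $j\leq k-1$), so $|g(y)|_p=|y|_p^k>0$.

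For the basin statement $A(x_0)=\mbox{Dom}(f_\theta)$, Lemmas \ref{contrac} and \ref{contrac1} already imply that every orbit originating in $B_{q,\theta}^{(1)}$ converges to $x_0=1$: indeed $f_\theta(B_{q,\theta}^{(1)})\subset\mathcal E_p\subset B_{q,\theta}^{(1)}$ and $f_\theta$ is a contraction on $\mathcal E_p$ with sole fixed point $1$. Hence it suffices to establish $f_\theta(\mbox{Dom}(f_\theta))\subset B_{q,\theta}^{(1)}$. Setting $\xi=(\theta x+q-1)/(x+\theta+q-2)$, so that $f_\theta(x)+q-1=\xi^k+q-1$, the desired inclusion reduces to $|\xi^k+q-1|_p>|\theta-1|_p$. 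If $|\xi|_p>1$, then $|\xi^k+q-1|_p=|\xi|_p^k\geq p>|\theta-1|_p$; if $|\xi|_p\leq 1$, then $\xi\in\mathbb Z_p$, and Lemma \ref{tushinarsiz} combined with the hypothesis yields $|\xi^k+q-1|_p\geq|q-1|_p>|\theta-1|_p$. In either case $f_\theta(x)\in B_{q,\theta}^{(1)}$, and the preceding step finishes the argument.

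The main subtlety I anticipate is only bookkeeping around the pole $x_*=2-\theta-q$ of $f_\theta$, since one must check that orbits never land on $x_*$. However, the identity $|x_*+q-1|_p=|\theta-1|_p$ places $x_*\in B_{q,\theta}^{(2)}$, so the containment $f_\theta(\mbox{Dom}(f_\theta))\subset B_{q,\theta}^{(1)}$ established above automatically keeps every forward iterate inside the domain.
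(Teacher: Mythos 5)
Your proposal is correct, and on the main point --- the basin statement $A(x_0)=\mbox{Dom}(f_\theta)$ --- it follows exactly the paper's route: use $\mbox{Sol}_p(x^k+q-1)=\varnothing$ together with Lemma \ref{tushinarsiz} to get $|f_\theta(x)+q-1|_p\geq|q-1|_p>|\theta-1|_p$ for every $x$ in the domain, hence $f_\theta(\mbox{Dom}(f_\theta))\subset B_{q,\theta}^{(1)}$, and then funnel orbits through Lemma \ref{contrac} into $\mathcal E_p$, where Lemma \ref{contrac1} gives contraction to $x_0=1$. Your closing remark about the pole $x_*=2-\theta-q$ lying in $B_{q,\theta}^{(2)}$ is a small point the paper glosses over, and it is a welcome addition. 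Where you genuinely diverge is on uniqueness: the paper merely checks that $x_0=1$ is attracting via the derivative $k(\theta-1)/(\theta-1+q)$ and lets uniqueness fall out of the basin statement (any fixed point lies in $A(x_0)$ and is therefore equal to $x_0$), whereas you prove it directly by eliminating $x$ to obtain $y^{k+1}-\theta y^k+(\theta+q-2)y-(q-1)=0$, factoring out $(y-1)$, and showing the cofactor $g(y)=y^k+(q-1)-(\theta-1)(y+\cdots+y^{k-1})$ has no $p$-adic zero by the same dichotomy $|y|_p\leq 1$ versus $|y|_p>1$. Your factorization checks out, and the argument is a clean, self-contained alternative that does not rely on the global convergence statement; its only cost is that it is logically redundant here, since the basin argument you already give subsumes it. Either way, the proof stands.
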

\begin{proof}
It is clear that $x_0=1$ is a fixed point for $f_\theta$. One has
$$
\frac{\partial}{\partial x}f_\theta(x_0)=\frac{k(\theta-1)}{\theta-1+q}
$$
The last one with $|q|_p=1$, $|k|_p\leq1$ implies
$|f'_\theta(x_0)|_p\leq|\theta-1|_p<1$. It means that $x_0$ is
attracting. Let us show that $A(x_0)=\mbox{Dom}(f_\theta)$. Indeed,
due to $\mbox{Sol}_p(x^k+q-1)=\varnothing$ from Lemma
\ref{tushinarsiz} one finds
$$
|y^k+q-1|_p\geq|q-1|_p,\ \ \ \mbox{for any }y\in\mathbb Q_p.
$$
In particularly, for any $x\in\mbox{Dom}(f_\theta)$ one has
$|f_\theta(x)+q-1|_p\geq|q-1|_p$. Since $|q-1|_p>|\theta-1|_p$ we
obtain $f_\theta(x^{(0)})\in B_{q,\theta}^{(1)}$. According to Lemma
\ref{contrac} one gets $f^2_\theta(x^{(0)})\in\mathcal E_p$.
Finally, the contractivity of $f_\theta$ on $\mathcal E_p$ and
$x_0\in\mathcal E_p$ yield that that $f^n(x)\to x_0$ as
$n\to\infty$. The arbitrariness of $x$ means that
$A(x_0)=\mbox{Dom}(f_\theta)$.
\end{proof}

Let us consider the case $\mbox{Sol}_p(x^k+q-1)\neq\varnothing$. For
a given $q\geq2$ with $|q-1|_p=p^{-s},\ s\geq0$ we define the set
\begin{equation}\label{X}
X=\bigcup_{i=1}^{\kappa_p} B_r(x_i),
\end{equation}
which is a finite union of disjoint balls. Here,
$r=\left|p^{\frac{s+k}{k}}(\theta-1)\right|_p$ and $x_i$ is defined
by \eqref{x_i1} if $s=0$ and by \eqref{x_i2} if $s\neq0$.
\begin{itemize}
\item[\textbf{Case.}] $s=0$
\begin{equation}\label{x_i1}
x_i=\left\{\begin{array}{ll}
1-q+\eta_i(\theta-1),& \mbox{if }\ \xi_i+q-1\not\equiv0(\operatorname{mod }p)\\[2mm]
1-q,& \mbox{if }\ \xi_i+q-1\equiv0(\operatorname{mod }p)
\end{array}
\right.
\end{equation}
where $\eta_i$ be a solution of
$$
\eta_i(\xi_i-1)+\xi_i+q-1\equiv0(\operatorname{mod }p)
$$
for a given $\xi_i\in\mbox{Sol}_p(x^k+q-1),\ i=\overline{1,\kappa_p}$.\\
\item[\textbf{Case.}]  $s>0$
\begin{equation}\label{x_i2}
x_i=
1-q+p^{\frac{s}{k}}\xi_i(\theta-1)
\end{equation}
where $\xi_i\in\mbox{Sol}_p(x^k+q-1),\ i=\overline{1,\kappa_p}$.
\end{itemize}

\begin{pro}\label{propdd}
Let $p\geq3$ and $|\theta-1|_p<|q-1|_p$. Assume that
$\mbox{Sol}_p(x^k+q-1)\neq\varnothing$. Then one has
$A(x_0)\supset\mbox{Dom}(f_\theta)\setminus X$.
\end{pro}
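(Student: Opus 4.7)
The plan is to mirror Proposition~\ref{prop11}: it suffices to show that every $x\in\mbox{Dom}(f_\theta)\setminus X$ has some forward iterate in $B_{q,\theta}^{(1)}$, for then Lemma~\ref{contrac} sends the next iterate into $\mathcal E_p$ and Lemma~\ref{contrac1} combined with the fixed point $x_0=1\in\mathcal E_p$ forces $f_\theta^n(x)\to x_0$. Points already in $B_{q,\theta}^{(1)}$ require no argument, so the work is in handling $x$ with $|x+q-1|_p\le|\theta-1|_p$.

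For such $x$ I change variables by setting $z:=(x+q-1)/(\theta-1)\in\mathbb Z_p$, i.e.\ $x=1-q+(\theta-1)z$. Using the identities $\theta x+q-1=(\theta-1)(\theta z-(q-1))$ and $x+\theta+q-2=(\theta-1)(1+z)$, a direct computation gives
\[
f_\theta(x)+q-1=\frac{M(z)}{(1+z)^k},\qquad M(z):=(\theta z-(q-1))^k+(q-1)(1+z)^k,
\]
reducing the task to proving $|M(z)|_p>|\theta-1|_p\cdot|1+z|_p^k$ whenever $x\notin X$. When $|1+z|_p<1$ (so $x$ is near the pole of $f_\theta$), one has $|\theta z-(q-1)|_p=1$, hence $|f_\theta(x)|_p=|1+z|_p^{-k}>1$, and $f_\theta(x)\in B_{q,\theta}^{(1)}$ is trivial.

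Assume $|1+z|_p=1$, so the target is $|M(z)|_p>|\theta-1|_p$. Lemma~\ref{alpbetgam} applied to the unit ratio $(\theta z-(q-1))/(z-(q-1))=1+(\theta-1)z/(z-(q-1))\in\mathcal E_p$ bounds the difference $(\theta z-(q-1))^k-(z-(q-1))^k$ by $|k(\theta-1)|_p\le|\theta-1|_p$, so everything reduces to lower bounds on the $\theta$-independent polynomial $P(z):=(z-(q-1))^k+(q-1)(1+z)^k$. A case split on $|z|_p$ via the strong triangle inequality shows, for $s>0$, that $|P(z)|_p\ge|q-1|_p>|\theta-1|_p$ unless $|z|_p=p^{-s/k}$ and the unit part $\tilde z:=p^{-s/k}z\pmod p$ is an element of $\mbox{Sol}_p(x^k+q-1)$; by \eqref{x_i2} this is exactly $x\in B_r(x_i)$. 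For $s=0$ the same strategy applies with a Newton refinement: the leading digit $\tilde z\pmod p$ must lie in $\mbox{Sol}_p$, and the next digit is then forced by $\eta(\tilde z-1)+\tilde z+q-1\equiv 0\pmod p$, reproducing the formulas \eqref{x_i1}; the degenerate subcase $\tilde z\equiv 1-q\pmod p$ makes the Newton correction vanish and yields $x_i=1-q$.

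The main obstacle is the final bookkeeping: verifying that the "bad" locus $\{z:|P(z)|_p\le|\theta-1|_p\}$ matches the union of balls $B_r(x_i)$ at exactly the prescribed radius $r=|p^{(s+k)/k}(\theta-1)|_p$, not at a coarser scale. The $s=0$ case is the most delicate, since $|q-1|_p=1$ leaves no gap between $|q-1|_p$ and $|\theta-1|_p$, and one must carry out the Newton refinement together with the degenerate/non-degenerate dichotomy of \eqref{x_i1} by hand.
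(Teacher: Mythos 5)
Your strategy coincides with the paper's: write $x=1-q+(\theta-1)z$ with $z\in\mathbb Z_p$ (the paper's $\eta$), compute $f_\theta(x)+q-1$ as a function of $z$, show its norm exceeds $|\theta-1|_p$ whenever $x\notin B_{q,\theta}^{(1)}\cup X$, and then finish via Lemma~\ref{contrac} and Lemma~\ref{contrac1}. Your identity $f_\theta(x)+q-1=M(z)/(1+z)^k$ is correct, and isolating the $\theta$-independent polynomial $P(z)$ and controlling $|M(z)-P(z)|_p$ by $|k(\theta-1)|_p$ via Lemma~\ref{alpbetgam} is a harmless repackaging of the paper's direct estimate (though you should note that the lemma applies only when the ratio $(\theta z-(q-1))/(z-(q-1))$ lies in $\mathcal E_p$, which fails when $z$ is very close to $q-1$; that subcase needs the separate, easy bound $|(\theta z-(q-1))^k|_p<|q-1|_p$). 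You are actually more careful than the paper in one spot, namely the pole neighborhood $|1+z|_p<1$, which the paper dismisses by silently asserting $|\eta+1|_p=1$.

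The difficulty is that you stop exactly where the proposition lives. Its entire content is the inclusion of the bad locus $\{x\notin B_{q,\theta}^{(1)}:|f_\theta(x)+q-1|_p\le|\theta-1|_p\}$ into $X$, and your final paragraph labels this ``the main obstacle'' and describes what ``must be carried out by hand'' rather than carrying it out. The paper executes it as a short residue computation: for $s>0$ it splits on whether $|\eta|_p=p^{-s/k}$ (if not, the strong triangle inequality gives $|f_\theta(x)+q-1|_p\ge|q-1|_p>|\theta-1|_p$ directly; if so, it writes $\eta=p^{s/k}\xi$ and uses Lemma~\ref{epproperty} to reduce $f_\theta(x)+q-1$ to $p^s\xi^k+q-1$ modulo $p^{s+1}$, which has norm $|q-1|_p$ exactly when $\xi\notin\mbox{Sol}_p(x^k+q-1)$); for $s=0$ it checks that failure of the congruence $\eta(\xi-1)+\xi+q-1\equiv0\ (\operatorname{mod}p)$ forces $|f_\theta(x)+q-1|_p\ge1$. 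In your notation this is the statement that $P(z)\equiv z^k+q-1\ (\operatorname{mod}p^{s+1})$ for $s>0$ and $P(z)=(1+z)^k(\xi^k+q-1)$ with $\xi=(z+1-q)/(1+z)$ for $s=0$, so the computation is genuinely short and you should supply it. You have correctly identified the remaining delicate point -- whether the bad locus is captured at the precise radius $r=|p^{(s+k)/k}(\theta-1)|_p$ rather than at the coarser mod-$p$ scale -- but identifying it is not resolving it: the residue argument only excludes the region $|\eta-\eta_i|_p=1$ (resp.\ $|\xi-\xi_i|_p=1$), leaving the annulus where the mod-$p$ congruence holds but $x\notin B_r(x_i)$ untreated. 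As written, your proposal is an accurate plan with the decisive verification still missing.
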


\begin{proof}
Take any $x\in\mbox{Dom}(f_\theta)\setminus X$. Note that
 $B_{q,\theta}^{(1)}\cap X=\varnothing$. So, first we consider a case
$x\not\in B_{q,\theta}^{(1)}\cup X$. Then, there exists a $p$-adic
integer $\eta$ such that
$$
x=1-q+\eta(\theta-1).
$$
Now consider two cases with respect to $s$.

\textbf{Case.} $s=0$. Suppose that
$\eta(\xi-1)+\xi+q-1\not\equiv0(\operatorname{mod }p)$ for any
solution $\xi$ of $x^k+q-1\equiv0(\operatorname{mod }p)$. Then we
get
$$
\left(\frac{1-q+\eta}{1+\eta}\right)^k+q-1\not\equiv0(\operatorname{mod }p)
$$
The last one with $|\theta-1|_p<1$ implies that
$$
|f_\theta(x)+q-1|_p=\left|\left(\frac{1-q+\eta+\eta(\theta-1)}{1+\eta}\right)^k+q-1\right|_p\geq1
$$
Hence, $f_\theta(x)\in B_{q,\theta}^{(1)}$.

 \textbf{Case.} $s>0$.
Assume that $|\eta|_p\neq p^{-\frac{s}{k}}$. Then by the
non-Archimedean norm's property one gets
$$
\left|1-q+\eta\theta\right|_p=\max\{|1-q|_p, |\eta|_p\}
$$
Noting $|\eta+1|_p=1$ we have
$$
|f_\theta(x)+q-1|_p=\left|\left(\frac{1-q+\eta\theta}{1+\eta}\right)^k+q-1\right|_p\geq|q-1|_p
$$
Since $|\theta-1|_p<|q-1|_p$ one gets $f_\theta(x)\in
B_{q,\theta}^{(1)}$.

Let $|\eta|_p=p^{-\frac{s}{k}}$. Then there exists a $p$-adic
integer $|\xi|_p=1$ such that $\eta=p^{\frac{s}{k}}\xi$. One has
$$
\theta+\frac{p^{\frac{s}{k}}(1-q)}{\xi}\in\mathcal E_p,\ \ \ \ 1+p^{\frac{s}{k}}\xi\in\mathcal E_p
$$
Then according to Lemma \ref{epproperty} we obtain
$$
\left(\frac{\theta+\frac{1-q}{p^{\frac{s}{k}}\xi}}{1+p^{\frac{s}{k}}\xi}\right)^k\in\mathcal E_p
$$
Hence,
$$
p^s\xi^k\left(\frac{\theta+\frac{1-q}{p^{\frac{s}{k}}\xi}}{1+p^{\frac{s}{k}}\xi}\right)^k\equiv
p^s\xi^k(\operatorname{mod }p^{s+1})
$$
From $\xi\not\in\mbox{Sol}_p(x^k+q-1)$ one finds
$\left|p^s\xi^k+q-1\right|_p=|q-1|_p$. Consequently, we have
$|f_\theta(x)+q-1|_p=|q-1|_p$, which yields $f_\theta(x)\in
B_{q,\theta}^{(1)}$.

Thus, we have shown that $f_\theta(x)\in B_{q,\theta}^{(1)}$ for any
$x\not\in B_{q,\theta}^{(1)}\cup X$. On the other hand, according to
Lemmas \ref{contrac1} and \ref{contrac} one has $A(x_0)\supset
B_{q,\theta}^{(1)}$. So, we conclude that
$A(x_0)\supset\mbox{Dom}(f_\theta)\setminus X$.
\end{proof}

\begin{pro}\label{prop12}
Let $p\geq3$ and $|\theta-1|_p<|q-1|_p=p^{-s},\ s\geq0$. If
$\mbox{Sol}_p(x^k+q-1)\neq\varnothing$. Then one has
\begin{equation}\label{fx-fy}
|f_\theta(x)-f_\theta(y)|_p=\frac{|k|_p}{p^{\frac{s(k-1)}{k}}|\theta-1|_p}|x-y|_p,\
\ \ \mbox{for any }x,y\in B_r(x_i).
\end{equation}
\end{pro}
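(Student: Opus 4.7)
My approach is to factor $f_\theta = u^k$ with $u(x) := (\theta x + q - 1)/(x + \theta + q - 2)$ and exploit the identity
\[
f_\theta(x) - f_\theta(y) = (u(x) - u(y))\sum_{j=0}^{k-1} u(x)^{k-1-j} u(y)^j.
\]
Direct algebraic simplification of the difference of two fractions yields
\[
u(x) - u(y) = \frac{(\theta - 1)(\theta + q - 1)(x - y)}{(x + \theta + q - 2)(y + \theta + q - 2)}.
\]
Since $|q|_p = 1$ and $|\theta - 1|_p < 1$, the strong triangle inequality gives $|\theta + q - 1|_p = 1$. For the denominator I would check, using the explicit form of $x_i$ in each of the cases \eqref{x_i1} and \eqref{x_i2}, that $|x + \theta + q - 2|_p = |\theta - 1|_p$ on $B_r(x_i)$; the key point is that $x_i + \theta + q - 2$ factors as $(\theta - 1) \cdot (\mbox{unit})$ and the radius $r = p^{-(s+k)/k}|\theta-1|_p$ is strictly smaller than $|\theta - 1|_p$. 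This gives $|u(x) - u(y)|_p = |x - y|_p/|\theta - 1|_p$.

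The central step is to represent $u(x)$, for $x \in B_r(x_i)$, in the form $u(x) = p^{s/k}\xi_i \cdot w(x)$ with $w(x) \in \mathcal E_p$. I would first handle the centre $x_i$: a short computation gives $\theta x_i + q - 1 = (\theta - 1)(1 - q + p^{s/k}\xi_i\theta)$ and $x_i + \theta + q - 2 = (\theta - 1)(1 + p^{s/k}\xi_i)$ in the case $s > 0$, with analogous identities in the case $s = 0$. Using the defining congruence $p^s \xi_i^k \equiv 1 - q \pmod{p^{s+1}}$ encoded by $\xi_i \in \mbox{Sol}_p(x^k + q - 1)$, I can extract the factor $p^{s/k}\xi_i$ from the numerator, leaving a cofactor in $\mathcal E_p$; the hypothesis $\sqrt[k]{1-q}\in\mathbb Q_p$ forces $k \mid s$, so $p^{s/k} \in \mathbb Z_p$ and $|p^{s/k}\xi_i|_p \leq p^{-1}$ when $s > 0$, which places the denominator in $(\theta - 1) \cdot \mathcal E_p$. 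For $s = 0$ the defining congruence for $\eta_i$ yields $u(x_i) \equiv \xi_i \pmod p$, placing $u(x_i)$ in $\xi_i \cdot \mathcal E_p$. To transport this from $x_i$ to a general $x \in B_r(x_i)$, I would use the already-derived formula: $|u(x) - u(x_i)|_p < r/|\theta - 1|_p = p^{-(s+k)/k}$, hence $|u(x)/u(x_i) - 1|_p < p^{-(s+k)/k + s/k} = p^{-1}$, which (for $p \geq 3$) forces $u(x)/u(x_i) \in \mathcal E_p$.

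Once $u(x)$ and $u(y)$ are written as $p^{s/k}\xi_i w(x)$ and $p^{s/k}\xi_i w(y)$ with $w(x), w(y) \in \mathcal E_p$, Lemma \ref{alpbetgam} applied to $w(x)$ and $w(y)$ yields
\[
\sum_{j=0}^{k-1} u(x)^{k-1-j} u(y)^j = p^{s(k-1)/k}\xi_i^{k-1} \cdot k\gamma, \qquad \gamma \in \mathcal E_p,
\]
so this sum has $p$-adic norm exactly $|k|_p \cdot p^{-s(k-1)/k}$. Multiplying by $|u(x) - u(y)|_p = |x - y|_p/|\theta - 1|_p$ produces the claimed equality \eqref{fx-fy}. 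I expect the principal obstacle to be the factorisation $u(x_i) = p^{s/k}\xi_i \cdot e_i$ with $e_i \in \mathcal E_p$: the delicate part is extracting the correct power of $p$ from $1 - q + p^{s/k}\xi_i\theta$ and verifying that the remaining cofactor lies in $\mathcal E_p$, which ultimately relies on the inequality $s(k-1)/k \geq 1$ (which holds because $k \mid s$ together with $s > 0$ implies $s \geq k$), and on a careful case split for $s = 0$ between the two branches of \eqref{x_i1}.
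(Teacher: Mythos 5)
Your proof is correct and follows essentially the same route as the paper's: the telescoping identity for a difference of $k$-th powers (the paper's displayed formula for $f_\theta(x)-f_\theta(y)$ in terms of $R(x)=\theta x+q-1$ and $Q(x)=x+\theta+q-2$ is exactly your $u=R/Q$ decomposition regrouped), the verification that $|R|_p$ and $|Q|_p$ are constant on each ball $B_r(x_i)$ with the same values, and the application of Lemma \ref{alpbetgam} to the unit cofactors. The only cosmetic difference is that you transport the factorisation from the centre $x_i$ to a general point via the Lipschitz estimate for $u$, whereas the paper parametrizes $z\in B_r(x_i)$ explicitly; your remark that $k\mid s$ (hence $s\geq k$ when $s>0$) is what makes the cofactor land in $\mathcal E_p$ is a point the paper leaves implicit, though note it already follows from $\mbox{Sol}_p(x^k+q-1)\neq\varnothing$ rather than from $\sqrt[k]{1-q}\in\mathbb Q_p$.
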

\begin{proof}
For any pair $(x,y)\in\mathbb Q_p^2$ we have
\begin{equation}\label{fx-fy=}
f_\theta(x)-f_\theta(y)=\frac{(\theta-1)(\theta+q-1)
\sum_{j=0}^{k-1}[R(x)Q(y)]^{k-1-j}[R(y)Q(x)]^{j}}{[Q(x)Q(y)]^{k}}(x-y)
\end{equation}
where
$$
R(x)=\theta x+q-1,\ \ \ \ Q(x)=x+\theta+q-2
$$
\textbf{Case.} $s=0$. Pick any $z\in B_r(x_i)$. Suppose that
$x_i\neq1-q$. Then there exists an $\alpha_z\in p\mathbb Z_p$ such
that
$$
z=1-q+(\theta-1)(\eta_i+\alpha_z)
$$
We have
$$
\begin{array}{ll}
R(z)=(\theta-1)\left(1-q+\eta_i+\alpha_z+(\theta-1)(\eta_i+\alpha_z)\right)\\
Q(z)=(\theta-1)\left(1+\eta_i+\alpha_z\right)
\end{array}
$$
It follows from $q\not\equiv0(\operatorname{mod }p)$ and the
definition of $\eta_i$ that $|\eta_i+1|_p=|\eta_i+1-q|_p=1$. So,
there exist $p$-adic integers $\beta_z,\gamma_z\in p\mathbb Z_p$
such that
$$
R(z)=(\theta-1)(1-q+\eta_i)(1+\beta_z),\ \ \ \
Q(z)=(\theta-1)(1+\eta_i)(1+\gamma_z)
$$
It follows that $|R(z)|_p=|Q(z)|_p=|\theta-1|_p$. Plugging them into
\eqref{fx-fy=} and according to Lemma \ref{abg}
 one finds
$$
|f_\theta(z_1)-f_\theta(z_2)|_p=\frac{|k|_p}{|\theta-1|_p}|z_1-z_2|_p,\ \ \ \mbox{for any }z_1,z_2\in B_r(x_i)
$$
Assume that $x_i=1-q$. Then for any $z\in B_r(x_i)$ there exists an
$\alpha_z\in p\mathbb Z_p$ such that $z=1-q+\alpha_z(\theta-1)$. In
this case, we get
$$
\begin{array}{ll}
R(z)=(\theta-1)(1-q)(1+\frac{\alpha_z}{1-q})\\
Q(z)=(\theta-1)(1+\alpha_z)
\end{array}
$$
According to Lemma \ref{abg} from \eqref{fx-fy=} one has
$$
f_\theta(z_1)-f_\theta(z_2)=\frac{k(1-q)^{k-1}\gamma}{\theta-1}(x-y),\ \ \ \gamma\in\mathcal E_p
$$
for any $z_1,z_2\in B_r(x_i)$.

Hence,
$$
|f_\theta(z_1)-f_\theta(z_2)|_p=\frac{|k|_p}{|\theta-1|_p}|z_1-z_2|_p
$$
\textbf{Case.} $s>0$. Then for $z\in B_r(x_i)$ we can find
$$
\begin{array}{ll}
R(z)=p^{\frac{s}{k}}\xi_i(\theta-1)(1+\beta_z),\\
Q(z)=(\theta-1)(1+\gamma_z)
\end{array}
$$
Putting these into \eqref{fx-fy=} and using the non-Archimedean
norm's property one gets
$$
|f_\theta(z_1)-f_\theta(z_2)|_p=\frac{|k|_p}{p^{\frac{s(k-1)}{k}}|\theta-1|_p}|z_1-z_2|_p,\
\ \ \mbox{for any }z'_1,z'_2\in B_r(x_i).
$$
This completes the proof.
\end{proof}

\section{Proof of Theorem \ref{mainres1}}

In this section, we assume that $|q-1|_p=p^{-s},\ s\geq0$ and
$\kappa_p\geq2$.

\begin{pro}\label{prop13}
Let $p\geq3$ and $\sqrt[k]{1-q}\in\mathbb Q_p$. Assume that $X$ be a
set defined as \eqref{X}. Then a triple $(X,J_{f_\theta},f_\theta)$
is a $p$-adic repeller iff $|k|_p>p^{\frac{s(k-1)}{k}}|\theta-1|_p$.
\end{pro}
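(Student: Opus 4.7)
The plan is to verify each of the three defining conditions of a $p$-adic repeller on $X$: (i) $f_\theta^{-1}(X)\subset X$; (ii) a uniform local expansion $|f_\theta(x)-f_\theta(y)|_p = p^{\tau_j}|x-y|_p$ on each ball $B_r(x_i)$ for some integer $\tau_j$; and (iii) $\tau_j>0$ for every $j$. The biconditional will then read off directly from the explicit expansion constant obtained in Proposition \ref{prop12}.

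First I would invoke Proposition \ref{prop12} to supply (ii). It gives the common expansion rate
\[
p^{\tau}=\frac{|k|_p}{p^{s(k-1)/k}|\theta-1|_p}
\]
on every $B_r(x_i)$, independent of $i$. The hypothesis $\sqrt[k]{1-q}\in\mathbb{Q}_p$, via the criterion of \cite{MS13}, implicitly guarantees that $p^{s/k}$ is a genuine integer power of $p$, so that $\tau\in\mathbb{Z}$ and the conclusion is well-posed.

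Next, I would establish (i) by contraposition: starting from $y\in\mbox{Dom}(f_\theta)\setminus X$, I need to show $f_\theta(y)\notin X$. The case analysis in the proof of Proposition \ref{propdd} already yields $f_\theta(y)\in B_{q,\theta}^{(1)}$ whenever $y\notin B_{q,\theta}^{(1)}\cup X$, and Lemma \ref{contrac} covers the remaining case $y\in B_{q,\theta}^{(1)}$, sending $y$ into $\mathcal E_p\subset B_{q,\theta}^{(1)}$. On the other hand, a direct inspection of \eqref{x_i1}--\eqref{x_i2} combined with $r=|p^{(s+k)/k}(\theta-1)|_p$ shows that every $x\in X$ satisfies $|x+q-1|_p\leq p^{-s/k}|\theta-1|_p<|\theta-1|_p$. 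Hence $X\cap B_{q,\theta}^{(1)}=\varnothing$ and $f_\theta(y)\notin X$, as desired.

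Condition (iii) finally reduces to $p^{\tau}>1$, which is literally $|k|_p>p^{s(k-1)/k}|\theta-1|_p$, yielding the ``iff''. The main obstacle, as I see it, is the bookkeeping around $X$ itself: (a) checking that the $\kappa_p$ balls making up $X$ are pairwise disjoint, which reduces to observing that distinct solutions $\xi_i\in\mbox{Sol}_p(x^k+q-1)$ are distinct modulo $p$, so the corresponding $x_i$ are separated by at least $p^{-s/k}|\theta-1|_p>r$; and (b) verifying $X\subset\mbox{Dom}(f_\theta)$ by checking that the pole $x=2-q-\theta$ is not in any $B_r(x_i)$, which follows from $|x_i-(2-q-\theta)|_p=|\theta-1|_p\cdot|1+\eta_i|_p=|\theta-1|_p>r$ using $|q|_p=1$ and the defining congruence of $\eta_i$. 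Both computations are short, and once performed the proposition falls out immediately from Propositions \ref{propdd} and \ref{prop12}.
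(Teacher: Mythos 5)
Your proof is correct, but it reaches the key condition $f_\theta^{-1}(X)\subset X$ by a genuinely different route than the paper. The paper writes down the $\kappa_p$ explicit inverse branches $g_{\theta,i}$ of $f_\theta$ (this is where the hypothesis $\sqrt[k]{1-q}\in\mathbb Q_p$ really enters) and verifies by direct estimates, separately for $s=0$ and $s>0$, that $g_{\theta,i}(X)\subset B_r(x_i)$. You instead argue by contraposition, recycling the forward-image analysis already carried out inside the proof of Proposition \ref{propdd} (the complement of $B^{(1)}_{q,\theta}\cup X$ is mapped into $B^{(1)}_{q,\theta}$), Lemma \ref{contrac} ($B^{(1)}_{q,\theta}$ is mapped into $\mathcal E_p\subset B^{(1)}_{q,\theta}$), and the disjointness $X\cap B^{(1)}_{q,\theta}=\varnothing$; this avoids all computations with $k$-th roots and is arguably more economical, and your added checks that the balls of $X$ are pairwise disjoint and avoid the pole are legitimate points the paper leaves implicit. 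The trade-off is that the paper's computation proves strictly more: $g_{\theta,i}(X)\subset B_r(x_i)$ for every $i$ gives $X\subset f_\theta(B_r(x_i))$, which is exactly Corollary \ref{corol2} (the incidence matrix is the all-ones matrix) and is the ingredient needed for the full shift in Theorem \ref{mainres1}; your softer argument yields only $f_\theta^{-1}(X)\subset X$ and would leave Corollary \ref{corol2} to be proved separately. One small slip: the chain $|x+q-1|_p\leq p^{-s/k}|\theta-1|_p<|\theta-1|_p$ fails at the last inequality when $s=0$ (there $|x_i+q-1|_p=|\theta-1|_p$ when $\eta_i$ is a unit); what you need, and what does hold, is $|x+q-1|_p\leq|\theta-1|_p$, which already excludes $x$ from $B^{(1)}_{q,\theta}$ since that set is defined by a strict inequality.
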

\begin{proof}
According to Proposition \ref{prop12} it is enough to show that
$f_\theta^{-1}(X)\subset X$.

Let us assume that $\sqrt[k]{1-q}\in\mathbb Q_p$. A function
$f_\theta$ has the following inverse branches on $X$:
$$
g_{\theta,i}(x)=\frac{(\theta+q-2)\hat{\xi}_i\sqrt[k]{\overline{x}}+1-q}{\theta-\hat{\xi}_i\sqrt[k]{\overline{x}}},\ \ i=\overline{1,\kappa_p}
$$
where $\hat{\xi}_i=\sqrt[k]{1-q}$ and $\sqrt[k]{\overline{x}}=\sqrt[k]{\frac{x}{1-q}}\in\mathcal E_p$.

We show that $g_{\theta,i}(x)\in B_r(x_i)$ for any $x\in X$.\\
\textbf{Case.} $s=0$. Take any $x\in X$. Then we get
\begin{equation}\label{g_i}
g_{\theta,i}-x_i=\frac{(\theta-1)\left[\eta_i(\hat{\xi}_i-1)+\hat{\xi}_i+q-1+(\eta_i+1)\hat{\xi}_i
(\sqrt[k]{\overline{x}}-1)-\eta_i(\theta-1)\right]}{1-\hat{\xi}_i-\hat{\xi}_i(\sqrt[k]{\overline{x}}-1)+\theta-1}
\end{equation}
One can see that
$$
\begin{array}{ll}
|\eta_i(\hat{\xi}_i-1)+\hat{\xi}_i+q-1|_p<1\\[2mm]
|\sqrt[k]{\overline{x}}-1|_p<1,\ \ |\theta-1|_p<1\\[2mm]
|\hat{\xi}_i|_p=|\eta_i|_p=|1-\hat{\xi}_i|_p=1
\end{array}
$$
Plugging these into \eqref{g_i} and using the strong triangle
inequality we obtain
$$
\left|g_{\theta,i}-x_i\right|_p<|\theta-1|_p
$$
which implies that $g_{\theta,i}(x)\in B_r(x_i)$. Hence, $f_\theta^{-1}(X)\subset X$.\\
\textbf{Case.} $s>0$. Take any $x\in X$. Then, we have
\begin{equation}\label{g_i2}
g_{\theta,i}-x_i=\frac{(\theta-1)\left[\hat{\xi}_i\sqrt[k]{\overline{x}}+q-1-p^{\frac{s}{k}}\xi_i
(\theta-\hat{\xi}_i\sqrt[k]{\overline{x}})\right]}{\theta-\hat{\xi}_i\sqrt[k]{\overline{x}}}
\end{equation}
Noting $\hat{\xi}_i=p^{\frac{s}{k}}\xi_i\alpha,\ \alpha\in\mathcal E_p$, we have
$$
\begin{array}{ll}
\left|\theta-\hat{\xi}_i\sqrt[k]{\overline{x}}\right|_p=1,\\[2mm]
\left|\hat{\xi}_i\sqrt[k]{\overline{x}}-p^{\frac{s}{k}}\xi_i
\theta\right|_p<p^{-\frac{s}{k}},\\[2mm]
\left|p^{\frac{s}{k}}\xi_i\hat{\xi}_i\sqrt[k]{\overline{x}}\right|_p=p^{-\frac{2s}{k}}.
\end{array}
$$
Putting the last ones into \eqref{g_i2} one finds
$$
|g_{\theta,i}(x)-x_i|_p\leq p^{-\frac{s}{k}-1}|\theta-1|_p
$$
From this, we immediately obtain $g_{\theta,i}(x)\in B_r(x_i)$ for
any $x\in X$.
\end{proof}

From this Proposition we immediately have the following
\begin{cor}\label{corol2}
Let $p\geq3$ and $\sqrt[k]{1-q}\in\mathbb Q_p$. Assume that $X$ be a
set defined as \eqref{X}. If
$|k|_p>p^{\frac{s(k-1)}{k}}|\theta-1|_p$ then for any
$i,j\in\{1,2,\dots,\kappa_p\}$ one has $B_r(x_i)\subset
f_\theta(B_r(x_j))$.
\end{cor}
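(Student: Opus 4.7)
The plan is to extract the inclusion directly from the inverse-branch construction carried out inside the proof of Proposition \ref{prop13}; no new computation is needed.

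Recall that under the hypothesis $\sqrt[k]{1-q}\in\mathbb Q_p$, the proof of Proposition \ref{prop13} exhibits, for each $j\in\{1,\dots,\kappa_p\}$, an explicit inverse branch $g_{\theta,j}\colon X\to\mathbb Q_p$ of $f_\theta$ satisfying $f_\theta\circ g_{\theta,j}=\mathrm{id}_X$. The case analysis there (treating $s=0$ and $s>0$ separately) established, via Lemma \ref{epproperty} and the strong triangle inequality, the sharper statement that $g_{\theta,j}(X)\subset B_r(x_j)$ for every $j$.

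From here the corollary follows in one line. Fix $i,j\in\{1,\dots,\kappa_p\}$ and take any $y\in B_r(x_i)$. By \eqref{X} one has $B_r(x_i)\subset X$, so $z:=g_{\theta,j}(y)$ is defined, lies in $B_r(x_j)$, and satisfies $f_\theta(z)=y$. Thus $y\in f_\theta(B_r(x_j))$, which, by the arbitrariness of $y$, yields $B_r(x_i)\subset f_\theta(B_r(x_j))$.

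The expansion hypothesis $|k|_p>p^{s(k-1)/k}|\theta-1|_p$ has already done its work upstream: by Proposition \ref{prop12} it ensures that the local scaling factor on each $B_r(x_j)$ strictly exceeds $1$, and this is what powered the strict nesting $g_{\theta,j}(X)\subset B_r(x_j)$ in Proposition \ref{prop13}. There is therefore no real obstacle at this step; the content of the corollary is merely the remark that the $g_{\theta,j}$ all have the whole of $X$ as their domain, so in fact $X\subset f_\theta(B_r(x_j))$ for every $j$. This is precisely the ``full incidence matrix'' property that will feed into the full-shift conclusion of Theorem \ref{mainres1}.
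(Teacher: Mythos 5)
Your proof is correct and matches the paper's route exactly: the paper offers no separate argument for Corollary \ref{corol2}, stating only that it follows immediately from Proposition \ref{prop13}, and your reading --- that each inverse branch $g_{\theta,j}$ is defined on all of $X$ and satisfies $g_{\theta,j}(X)\subset B_r(x_j)$, hence $X\subset f_\theta(B_r(x_j))$ for every $j$ --- is precisely the intended deduction. Nothing further is needed.
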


\begin{proof}[Proof of Theorem \ref{mainres1}]
We have shown that under conditions $\sqrt[k]{1-q}\in\mathbb Q_p$,
$\kappa_p\geq2$ and $|k|_p>p^{\frac{s(k-1)}{k}}|\theta-1|_p$ the
$(X,J_{f_\theta},f_\theta)$ triple is a $p$-adic repeller. Moreover,
under these conditions incidence matrix $A$ for the function
$f_\theta: X\to \mathbb Q_p$ has the following form (it follows from
Corollary \ref{corol2})
$$
A=\left(\begin{array}{llll}
1 & 1 & \dots & 1\\
1 & 1 & \dots & 1\\
\vdots & \vdots & \ddots & \vdots\\
1 & 1 & \dots & 1
\end{array}
\right)
$$
This means that a triple $(X,J_{f_\theta},f_\theta)$ is a
transitive, hence according to Theorem \ref{xit} we conclude that
the dynamics $(J_{f_\t},f_\t,|\cdot|_p)$ is isometrically conjugate
to the shift dynamics $(\Sigma_A,\sigma,d_f)$.
\end{proof}

\appendix

\section{$p$-adic measure}

Let $(X,\cb)$ be a measurable space, where $\cb$ is an algebra of
subsets $X$. A function $\m:\cb\to \bq_p$ is said to be a {\it
$p$-adic measure} if for any $A_1,\dots,A_n\subset\cb$ such that
$A_i\cap A_j=\emptyset$ ($i\neq j$) the equality holds
$$
\mu\bigg(\bigcup_{j=1}^{n} A_j\bigg)=\sum_{j=1}^{n}\mu(A_j).
$$

A $p$-adic measure is called a {\it probability measure} if
$\mu(X)=1$.  A $p$-adic probability measure $\m$ is called {\it
bounded} if $\sup\{|\m(A)|_p : A\in \cb\}<\infty $. For more detail
information about $p$-adic measures we refer to
\cite{K3},\cite{KhN}.

\section{Cayley tree}

Let $\Gamma^k_+ = (V,L)$ be a semi-infinite Cayley tree of order
$k\geq 1$ with the root $x^0$ (whose each vertex has exactly $k+1$
edges, except for the root $x^0$, which has $k$ edges). Here $V$ is
the set of vertices and $L$ is the set of edges. The vertices $x$
and $y$ are called {\it nearest neighbors} and they are denoted by
$l=<x,y>$ if there exists an edge connecting them. A collection of
the pairs $<x,x_1>,\dots,<x_{d-1},y>$ is called a {\it path} from
the point $x$ to the point $y$. The distance $d(x,y), x,y\in V$, on
the Cayley tree, is the length of the shortest path from $x$ to $y$.
$$
W_{n}=\left\{ x\in V\mid d(x,x^{0})=n\right\}, \ \
V_n=\overset{n}{\underset{m=0}{\bigcup}}W_{m}, \ \ L_{n}=\left\{
l=<x,y>\in L\mid x,y\in V_{n}\right\}.
$$
The set of direct successors of $x$ is defined by
$$
S(x)=\left\{ y\in W_{n+1}:d(x,y)=1\right\}, x\in W_{n}.
$$
Observe that any vertex $x\neq x^{0}$ has $k$ direct successors and
$x^{0}$ has $k+1$.

\section{$p$-adic quasi Gibbs measure}

In this section we recall the definition of  $p$-adic quasi Gibbs
measure (see \cite{M12}).

Let $\Phi=\{1,2,\cdots,q\}$, here $q\geq 2$, ($\Phi$ is called a
{\it state space}) and is assigned to the vertices of the tree
$\G^k_+=(V,\Lambda)$. A configuration $\s$ on $V$ is then defined as
a function $x\in V\to\s(x)\in\Phi$; in a similar manner one defines
configurations $\s_n$ and $\w$ on $V_n$ and $W_n$, respectively. The
set of all configurations on $V$ (resp. $V_n$, $W_n$) coincides with
$\Omega=\Phi^{V}$ (resp. $\Omega_{V_n}=\Phi^{V_n},\ \
\Omega_{W_n}=\Phi^{W_n}$). One can see that
$\Om_{V_n}=\Om_{V_{n-1}}\times\Om_{W_n}$. Using this, for given
configurations $\s_{n-1}\in\Om_{V_{n-1}}$ and $\w\in\Om_{W_{n}}$ we
define their concatenations  by
$$
(\s_{n-1}\vee\w)(x)= \left\{
\begin{array}{ll}
\s_{n-1}(x), \ \ \textrm{if} \ \  x\in V_{n-1},\\
\w(x), \ \ \ \ \ \ \textrm{if} \ \ x\in W_n.\\
\end{array}
\right.
$$
It is clear that $\s_{n-1}\vee\w\in \Om_{V_n}$.

The (formal) Hamiltonian of $p$-adic Potts model is
\begin{equation}\label{ph}
H(\sigma)=J\sum_{\langle x,y\rangle\in L}
\delta_{\sigma(x)\sigma(y)},
\end{equation}
where $J\in B(0, p^{-1/(p-1)})$ is a coupling constant, and
$\delta_{ij}$ is the Kroneker's symbol.

A construct of a  generalized $p$-adic quasi Gibbs measure
corresponding to the model is given below.

Assume that  $\h: V\setminus\{x^{(0)}\}\to\bq_p^{\Phi}$ is a
mapping, i.e. $\h_x=(h_{1,x},h_{1,x},\dots,h_{q,x})$, where
$h_{i,x}\in\bq_p$ ($i\in\Phi$) and $x\in V\setminus\{x^{(0)}\}$.
Given $n\in\bn$, we consider a $p$-adic probability measure
$\m^{(n)}_{\h,\rho}$ on $\Om_{V_n}$ defined by
\begin{equation}\label{mu}
\mu^{(n)}_{\h}(\s)=\frac{1}{Z_{n}^{(\h)}}\exp\{H_n(\s)\}\prod_{x\in
W_n}h_{\s(x),x}
\end{equation}
Here, $\s\in\Om_{V_n}$, and $Z_{n}^{(\h)}$ is the corresponding
normalizing factor
\begin{equation}\label{ZN1}
Z_{n}^{(\h)}=\sum_{\s\in\Omega_{V_n}}\exp\{H_n(\s)\}\prod_{x\in
W_n}h_{\s(x),x}.
\end{equation}

In this paper, we are interested in a construction of an infinite
volume distribution with given finite-dimensional distributions.
More exactly, we would like to find a $p$-adic probability measure
$\m$ on $\Om$ which is compatible with given ones $\m_{\h}^{(n)}$,
i.e.
\begin{equation}\label{CM}
\m(\s\in\Om: \s|_{V_n}=\s_n)=\m^{(n)}_{\h}(\s_n), \ \ \ \textrm{for
all} \ \ \s_n\in\Om_{V_n}, \ n\in\bn.
\end{equation}

We say that the $p$-adic probability distributions \eqref{mu} are
\textit{compatible} if for all $n\geq 1$ and $\sigma\in
\Phi^{V_{n-1}}$:
\begin{equation}\label{comp}
\sum_{\w\in\Om_{W_n}}\m^{(n)}_{\h}(\s_{n-1}\vee\w)=\m^{(n-1)}_{\h}(\s_{n-1}).
\end{equation}
 This condition according to the Kolmogorov extension theorem (see \cite{KL}) implies the existence of a unique $p$-adic measure
$\m_{\h}$ defined on $\Om$ with a required condition \eqref{CM}.
Such a measure $\m_{\h}$ is said to be {\it a $p$-adic quasi Gibbs
measure} corresponding to the model \cite{M12,M13}. If one has
$h_x\in\ce_p$ for all $x\in V\setminus\{x^{(0)}\}$, then the
corresponding measure $\m_\h$ is called \textit{$p$-adic Gibbs
measure} (see \cite{MR1}).

By $Q\cg(H)$ we denote the set of all $p$-adic quasi Gibbs measures
associated with functions $\h=\{\h_x,\ x\in V\}$. If there are at
least two distinct generalized $p$-adic quasi Gibbs measures such
that at least one of them is unbounded, then we say that \textit{a
phase transition} occurs.

The following statement describes conditions on $h_x$ guaranteeing
compatibility of $\mu_{\bf h}^{(n)}(\sigma)$.

\begin{thm}\label{comp1}\cite{M12} The measures $\m^{(n)}_{\h}$, $
n=1,2,\dots$ (see \eqref{mu}) associated with $q$-state Potts model
\eqref{ph} satisfy the compatibility condition \eqref{comp} if and
only if for any $n\in \bn$ the following equation holds:
\begin{equation}\label{eq1}
\hat h_{x}=\prod_{y\in S(x)}{\mathbf{F}}(\hat \h_{y},\theta),
\end{equation}
here and below a vector $\hat \h=(\hat h_1,\dots,\hat
h_{q-1})\in\bq_p^{q-1}$ is defined by a vector
$\h=(h_1,h_1,\dots,h_{q})\in\bq_p^{q}$ as follows
\begin{equation}\label{H}
\hat h_i=\frac{h_i}{h_q}, \ \ \ i=1,2,\dots,q-1
\end{equation}
and mapping ${\mathbf{F}}:\bq_p^{q-1}\times\bq_p\to\bq_p^{q-1}$ is
defined by
${\mathbf{F}}(\xb;\theta)=(F_1(\xb;\theta),\dots,F_{q-1}(\xb;\theta))$
with
\begin{equation}\label{eq2}
F_i(\xb;\theta)=\frac{(\theta-1)x_i+\sum\limits_{j=1}^{q-1}x_j+1}
{\sum\limits_{j=1}^{q-1}x_j+\theta}, \ \ \xb=\{x_i\}\in\bq_p^{q-1},
\ \ i=1,2,\dots,q-1.
\end{equation}
\end{thm}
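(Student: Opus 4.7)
The plan is to prove both directions by unwinding the compatibility condition \eqref{comp} using the explicit formula \eqref{mu} and the additive structure of the Hamiltonian \eqref{ph}. First I would note that since the edges in $L_n \setminus L_{n-1}$ partition as $\bigsqcup_{x \in W_{n-1}} \{\langle x, y \rangle : y \in S(x)\}$, the Hamiltonian splits as
\begin{equation*}
H_n(\sigma_{n-1}\vee\w) = H_{n-1}(\sigma_{n-1}) + J\sum_{x\in W_{n-1}}\sum_{y\in S(x)}\delta_{\sigma_{n-1}(x),\,\w(y)}.
\end{equation*}
Likewise, $\Omega_{W_n} = \prod_{x\in W_{n-1}}\prod_{y\in S(x)}\Phi$, so the sum over $\w \in \Omega_{W_n}$ factors as a product of independent one-vertex sums. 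Substituting into \eqref{comp}, the factor $\exp\{H_{n-1}(\sigma_{n-1})\}$ cancels on both sides, leaving
\begin{equation*}
\frac{1}{Z_n^{(\h)}}\prod_{x\in W_{n-1}}\prod_{y\in S(x)}\Bigl(\sum_{j=1}^{q}\exp\{J\delta_{\sigma_{n-1}(x),j}\}\,h_{j,y}\Bigr) = \frac{1}{Z_{n-1}^{(\h)}}\prod_{x\in W_{n-1}} h_{\sigma_{n-1}(x),x}.
\end{equation*}

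Next I would use $\exp\{J\delta_{ij}\}h_{j,y} = h_{j,y} + (\theta-1)\delta_{ij}h_{j,y}$ with $\theta = \exp_p(J)$, so that the inner sum becomes $S_y + (\theta-1)h_{\sigma_{n-1}(x),y}$, where $S_y := \sum_{j=1}^q h_{j,y}$. Rearranging the displayed identity gives
\begin{equation*}
\prod_{x\in W_{n-1}}\frac{\prod_{y\in S(x)}\bigl[(\theta-1)h_{\sigma_{n-1}(x),y} + S_y\bigr]}{h_{\sigma_{n-1}(x),x}} = \frac{Z_n^{(\h)}}{Z_{n-1}^{(\h)}}.
\end{equation*}
The right side is independent of $\sigma_{n-1}$. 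By varying $\sigma_{n-1}$ at a single vertex $x \in W_{n-1}$ while freezing the other coordinates, each factor on the left must itself be a constant $C_x$ not depending on the value of $\sigma_{n-1}(x)$. Dividing the factor indexed by $i \in \{1,\dots,q-1\}$ by the factor indexed by $q$ eliminates $C_x$ and yields
\begin{equation*}
\frac{h_{i,x}}{h_{q,x}} = \prod_{y\in S(x)}\frac{(\theta-1)h_{i,y} + S_y}{(\theta-1)h_{q,y} + S_y}.
\end{equation*}

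Finally I would divide numerator and denominator inside the product by $h_{q,y}$: using the definition \eqref{H} this turns the right-hand side into exactly $\prod_{y\in S(x)} F_i(\hat\h_y;\theta)$ with $F_i$ as in \eqref{eq2}, establishing \eqref{eq1}. The converse direction is immediate: if $\hat h_x = \prod_{y\in S(x)}\mathbf{F}(\hat\h_y,\theta)$ for every $x$, then one can reverse the chain of equalities above, with the resulting ratio $Z_n^{(\h)}/Z_{n-1}^{(\h)}$ providing the correct normalization. The main technical point to watch is the legitimacy of the ``vary one coordinate'' argument that extracts the pointwise recursion from the product equation; this requires that we may independently prescribe each $\sigma_{n-1}(x)$ for $x \in W_{n-1}$, which is exactly the product structure of $\Omega_{V_{n-1}}$ restricted to $W_{n-1}$, so no $p$-adic subtlety arises (all manipulations are purely algebraic identities in $\bq_p$).
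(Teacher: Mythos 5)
Your derivation is correct and is essentially the standard argument; the paper itself gives no proof of this theorem (it is quoted from \cite{M12}), and the proof there proceeds exactly as you do: split $H_n$ over the edges joining $W_{n-1}$ to $W_n$, factor the sum over $\w$ vertex by vertex, observe that the resulting product must be independent of $\s_{n-1}$ so each factor is constant, and pass to the ratios $\hat h_i = h_i/h_q$ to obtain \eqref{eq1} with $F_i$ as in \eqref{eq2}. The only implicit assumption worth flagging is the nonvanishing of $h_{q,y}$ and of the factors $(\theta-1)h_{j,y}+S_y$, which is needed for the division steps and is part of the measures being well defined.
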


Let us first observe that the set
$(\underbrace{1,\dots,1,h}_m,1,\dots,1)$ ($m=1,\dots,q-1$) is
invariant for the equation \eqref{eq1}. Therefore, in what follows,
we restrict ourselves to one of such lines, let us say
$(h,1,\dots,1)$.

In \cite{MK16} to establish the phase transition, we considered
translation-invariant (i.e. $\h=\{\h_x\}_{x\in V\setminus\{x^0\}}$
such that $\h_x=\h_y$ for all $x,y$) solutions of \eqref{eq1}. Then
the equation \eqref{eq1} reduced to the following one
\begin{equation}\label{eq12}
h=f_\t(h),
\end{equation}
where
\begin{equation}\label{f}
f_\t(x)=\bigg(\frac{\t x+q-1}{x+\t+q-2}\bigg)^k.
\end{equation}

Hence, to establish the existence of the phase transition, when
$k=2$, we showed \cite{MR1} that \eqref{eq12} has three nontrivial
solutions if $q$ is divisible by $p$.  Note that full description of
all solutions of the last equation has been carried out in
\cite{RKh} if $k=2$ and in \cite{SA15} if $k=3$.

\end{document}